\newtheorem{theorem}{Theorem}
\newtheorem{corollary}[theorem]{Corollary}
\newtheorem{conjecture}[theorem]{Conjecture}
\newtheorem{lemma}[theorem]{Lemma}
\newtheorem{definition}[theorem]{Definition}
\newcommand{\ecc}[1]{\mbox{ecc}#1}
\newcommand{\rad}[1]{\mbox{rad}#1}
\newcommand{\G}{\mathcal{G}}
\newcommand{\TF}{\mathcal{TF}}
\title{Burning Graph Classes}
 \author[1]{Mohamed Omar\thanks{\textcolor{blue}{\href{mailto:omar@g.hmc.edu}{omar@g.hmc.edu}} This research was supported by the AMS Claytor-Gilmer Fellowship, the Karen EDGE Fellowship, and the Harvey Mudd College Faculty Research, Scholarship, and Creative Works Award.}}
 \author[2]{Vibha Rohilla\thanks{\textcolor{blue}{\href{mailto:vrohilla@g.hmc.edu}{vrohilla@g.hmc.edu}} This research was supported by the Harvey Mudd College Department of Mathematics Giovanni Borrelli Fellowship.}}
 \affil[1,2]{{Department of Mathematics, Harvey Mudd College}}
\begin{document}
\maketitle

\begin{abstract}
The Burning Number Conjecture, that a graph on $n$ vertices can be burned in at most $\lceil \sqrt{n} \ \rceil$ rounds, has been of central interest for the past several years. Much of the literature toward its resolution focuses on two directions: tightening a general upper bound for the burning number, and proving the conjecture for specific graph classes. In the latter, most of the developments work within a specific graph class and exploit the intricacies particular to it. In this article, we broaden this approach by developing systematic machinery that can be used as test beds for asserting that graph classes satisfy the conjecture. We show how to use these to resolve the conjecture for several classes of graphs including triangle-free graphs with degree lower bounds, graphs with certain linear lower bounds on $r$-neighborhood sizes, all trees whose non-leaf vertices have degree at least $4$, trees whose non-leaf vertices have degree at least $3$ (on at least $81$ vertices), trees whose non-leaf vertices are less than $\frac{2}{3}$ concentrated in degree $2$, and trees with a low concentration of high degree non-leaf vertices (the last two results holding for sufficiently many non-leaf vertices).
\end{abstract}

\section{Introduction}\label{sec:intro}

Recently, a mathematical model for analyzing contagion spread was introduced by Bonato, Janssen and Roshanbin \cite{bonato2014burning},\cite{bonato2016burn}. In this model, contagion is treated as a fire spreading through a network. Given an undirected simple graph modeling the network at hand, fire spreads through the graph in rounds in the following manner. In the first round, a vertex is chosen to be lit by fire. In each subsequent round, two events occur. Firstly, all vertices that are on fire spread the fire to vertices adjacent to them. Secondly, a new vertex in the network is selected to be lit by fire. This process continues until all vertices are on fire. 

The central concern in this model is how quickly a contagion can spread in a given network. A detailed understanding of the mechanisms that influence the speed of spread is motivated by practical applications, such as understanding the propagation of fake news or political propaganda through social networks.  For a given network this is inherently tied to the sequence of vertices that are selected to be newly lit in each round. This motivates the central statistic of interest in this model, the \emph{burning number} of a graph $G$. Denoted by $b(G)$, the burning number is the minimum number of rounds needed to complete the burning of $G$.

There are many graph classes for which the burning number is explicitly or asymptotically known. Letting $P_n$ be a path on $n$ vertices, it was shown in \cite{bonato2016burn} that $b(P_n)=\lceil \sqrt{n} \ \rceil$. For grid graphs $G_{m,n}$ that can be recognized as the graph product $P_m \square P_n$, it was proven in \cite{mitsche2017burning} that $b(G_{m,n})$ is asymptotically either $\Theta(\sqrt{n})$ or $O \left( \sqrt[3]{\frac{3}{2} mn} \right)$, depending on the relationship between $m$ and $n$. Similar asymptotic results hold for the strong product $P_m \boxtimes P_n$ \cite{bonato2016burn}. The $n$-cube was proven to have a burning number that is asymptotically logarithmic in the number of vertices \cite{mitsche2018burning}. A unifying phenomenon in all these graph classes and many others in the literature is that the burning number is at most $\lceil \sqrt{n} \ \rceil$ for any graph on $n$ vertices. The  central unanswered question on graph burning is whether this is true for all connected graphs. 

\begin{conjecture}[Burning Number Conjecture, \cite{bonato2014burning}]\label{conjecture:main}
Let $G$ be a connected graph on $n$ vertices. Then $b(G) \leq \lceil \sqrt{n} \ \rceil.$
\end{conjecture}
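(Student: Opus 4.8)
The plan is to exploit the standard reformulation of burning as a ball-covering problem. Writing $B_r(v)$ for the set of vertices within distance $r$ of $v$, one has $b(G)\le k$ precisely when there exist centers $v_1,\dots,v_k$ with
\[
\bigcup_{i=1}^{k} B_{k-i}(v_i)=V(G),
\]
so the conjecture asserts that $k=\lceil\sqrt{n}\ \rceil$ balls of radii $k-1,k-2,\dots,1,0$ always suffice to cover a connected $n$-vertex graph. Since adding edges only shrinks distances and hence enlarges balls, $b(G)\le b(T)$ for any spanning tree $T$ of $G$, and it therefore suffices to prove the bound for trees. Among trees the path $P_n$ is the extremal case, with $b(P_n)=\lceil\sqrt{n}\ \rceil$ already matching the conjectured bound, so the real content is to show that no tree burns more slowly than a path on the same number of vertices.

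For the tree case I would attempt a greedy covering argument driven by a volume count. In a path, the balls of radii $0,1,\dots,k-1$ cover at most $\sum_{r=0}^{k-1}(2r+1)=k^2$ vertices, which is exactly the source of the threshold $k=\lceil\sqrt{n}\ \rceil$; in a general tree the balls are typically larger, suggesting that extra degree should translate into covering slack. Concretely, I would repeatedly select a center maximizing (or nearly maximizing) the number of newly covered vertices at the current radius, peel off balls of decreasing radius, and track the uncovered remainder, aiming to show that $\lceil\sqrt{n}\ \rceil$ rounds exhaust the vertex set.

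The hard part, and the reason this plan does not simply close the problem, is that Conjecture~\ref{conjecture:main} is the central open question of the area: the path is a tight example that leaves no slack, and trees whose non-leaf vertices are overwhelmingly of degree $2$ behave almost exactly like paths, so the naive volume bound has no margin to absorb the overlaps between successive balls. A greedy argument stalls precisely on these near-path trees, where local degree provides no help. Accordingly, rather than attacking the conjecture head-on, I would aim to isolate structural hypotheses---lower bounds on degrees, or on the growth of $r$-neighborhoods---that force each selected ball to cover enough new vertices to beat the $k^2$ budget with room to spare, and then verify the conjecture on the resulting families. This is exactly the ``test bed'' strategy the abstract describes, and I expect the decisive obstacle to be the trees that are nearly uniformly of degree $2$, which must be handled by a separate argument that activates once a sufficient fraction of higher-degree non-leaf vertices is guaranteed.
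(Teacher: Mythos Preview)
The statement you were asked to ``prove'' is Conjecture~\ref{conjecture:main}, the Burning Number Conjecture itself, and the paper does \emph{not} prove it; it is stated as the central open problem and the paper's contribution is the partial progress in Sections~\ref{sec:families} and~\ref{sec:trees}. So there is no paper proof to compare your attempt against, and your proposal is not a proof but a (perfectly reasonable) discussion of why the problem is hard and what strategy one might pursue.

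That said, your assessment is accurate and aligns closely with the paper's philosophy. You correctly invoke the ball-covering reformulation, the reduction to trees (the paper's Theorem~\ref{thm:treereduction}), the tightness at $P_n$, and the observation that trees dominated by degree-$2$ vertices are the obstruction to any volume-based greedy argument. Your proposed fallback---impose structural hypotheses on degrees or $r$-neighborhood growth and prove the conjecture on the resulting families---is exactly what the paper carries out: Theorem~\ref{thm:main} and Theorem~\ref{thm:main2} formalize the $r$-neighborhood growth idea via ``tetherings'', and Theorem~\ref{thm:trees} through Theorem~\ref{thm:prob2} handle trees under degree-distribution hypotheses, with the degree-$2$ concentration threshold $p<\tfrac{2}{3}$ in Theorem~\ref{thm:prob} making precise your intuition about ``near-path'' trees. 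So while you have not supplied a proof (none is known), your diagnosis of the obstruction and your sketched program match the paper's actual content.
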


A graph that satisfies the Burning Number Conjecture is said to be \emph{well-burnable}, and though the conjecture has not been resolved, the state of the art bound of $\sqrt{\frac{4n}{3}} + O(1)$ was recently established by Bonato et al \cite{bonato2021improved} and independently Bastide et al \cite{improvedburning}. Alongside improving the general upper bound for $b(G)$, a common theme amongst papers in the graph burning literature is proving that specific graph classes are well-burnable. For instance, in addition to the previously mentioned graph classes, classes of trees including spiders and caterpillars have been proven to be well-burnable \cite{bonato2019bounds},\cite{das2018burning},\cite{liu2020burning}. The contribution of this paper to the literature is to complement these findings but in a more general framework. Instead of establishing that specific graph classes are well-burnable, we present general paradigms that can be used to do so. These constructs, developed in Theorem~\ref{thm:main} and Theorem~\ref{thm:trees}, allow us to prove many graph classes are well-burnable. The classes include triangle-free graphs with degree lower bounds (Theorem~\ref{thm:trianglefree}), graphs whose closed $r$-neighborhoods have sizes at least linear in $r$ (for $r \geq 2$, as seen in Theorem~\ref{thm:main2}), trees whose non-leaf vertices have degree at least $4$, trees on at least $81$ vertices whose non-leaf vertices have degree at least $3$ (both in Theorem~\ref{thm:trees2}), and trees whose non-leaf vertices are not too concentrated in degree $2$ (Theorem~\ref{thm:prob}) or are barely concentrated in high degree (Theorem~\ref{thm:prob2}), the last two classes on sufficiently many non-leaf vertices. These widespread applications suggest that our developments are fruitful for establishing many other graph classes as well-burnable.

After preliminaries in Section~\ref{sec:prelim}, we continue to our two specific paradigms. Section~\ref{sec:families} focuses on the first of these, which provides conditions for establishing that a graded family of graphs is eventually well-burnable if its $r$-neighborhoods satisfy a lower bound condition. Section~\ref{sec:trees} focuses on the second of these, which establishes a condition on the degree distribution of trees that guarantees a tree is well-burnable. We conclude with future directions in Section 5.

\section{Preliminaries}\label{sec:prelim}

We begin our discussion with necessary graph theoretic terminology. Throughout our discussion, our graphs will be finite, simple, undirected and connected.

The first set of statistics we introduce focus on distance metrics in a graph. Given a graph $G$ and vertices $u,v$ in $G$, the \emph{distance} between $u$ and $v$, denoted $d(u,v)$, is the number of edges in the shortest path from $u$ to $v$. For a fixed vertex $v$, the maximum distance from $v$ to any other vertex in $G$ is said to be its \emph{eccentricity} and is denoted $\ecc(v)$. The \emph{radius} of $G$, denoted $\rad(G)$, is the minimum eccentricity achieved by any of its vertices. That is, $\rad(G) = \min_{v \in V(G)} \ecc(v).$ Loosely speaking, a vertex $v$ whose eccentricity is $\rad(G)$ is centrally located in the graph. 


A closely related concept that our results in Section~\ref{sec:families} heavily hinge on is the closed $r$-neighborhood of a vertex in a graph. For a vertex $v$, and a fixed positive integer $r$, the \emph{closed $r$-neighborhood} of $v$, denoted $N_r(v)$, is the set of vertices of distance at most $r$ from $v$. That is $N_r(v) = \{ u \in V(G) \ : \ d(u,v) \leq r \}.$ For a fixed positive integer $r$, closed $r$-neighborhoods  of vertices inform an upper bound on the burning number through the following lemma, which encapsulates ideas from Section 2 of \cite{bonato2019approximation} but whose proof is in Section 2 of \cite{kamali2020burning}. We include a proof for completeness.
\begin{lemma}[\cite{kamali2020burning}]\label{lem:maximal}
For a graph $G$ and a fixed positive integer $r$, suppose $A$ is a maximal set of vertices in $G$ with respect to having disjoint closed $r$-neighborhoods. Then $b(G) \leq |A| + 2r.$
\end{lemma}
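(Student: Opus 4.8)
The plan is to invoke the standard covering reformulation of the burning number: $b(G) \le k$ exactly when there is a sequence of vertices $(x_1, \dots, x_k)$, with $x_i$ the source lit in round $i$, such that $\bigcup_{i=1}^{k} N_{k-i}(x_i) = V(G)$. This reflects the fact that a source lit in round $i$ has spread for $k-i$ rounds by the end of round $k$, hence has burned precisely the vertices within distance $k-i$ of it. I would set $k = |A| + 2r$ and exhibit such a covering sequence explicitly.

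First I would extract the geometric consequence of maximality. Write $A = \{v_1, \dots, v_m\}$ with $m = |A|$. Suppose for contradiction some vertex $u$ had $d(u, v_j) > 2r$ for every $j$. Then $N_r(u)$ would be disjoint from each $N_r(v_j)$, since a common vertex $w \in N_r(u) \cap N_r(v_j)$ would force $d(u, v_j) \le d(u, w) + d(w, v_j) \le 2r$ by the triangle inequality. Thus $A \cup \{u\}$ would still consist of vertices with pairwise disjoint closed $r$-neighborhoods, contradicting the maximality of $A$. Hence every vertex of $G$ lies within distance $2r$ of some center, that is,
\[
V(G) = \bigcup_{j=1}^{m} N_{2r}(v_j).
\]

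Next I would build the burning sequence of length $k = m + 2r$ by lighting the centers first: set $x_j = v_j$ for $1 \le j \le m$, and let $x_j$ be arbitrary for $m < j \le k$. By the end of round $k$, the center $v_j$ has spread for $k - j = m + 2r - j \ge 2r$ rounds whenever $j \le m$, so $N_{k-j}(v_j) \supseteq N_{2r}(v_j)$. Combining with the covering established above gives
\[
\bigcup_{i=1}^{k} N_{k-i}(x_i) \ \supseteq\ \bigcup_{j=1}^{m} N_{2r}(v_j) \ =\ V(G),
\]
so this is a valid burning sequence and therefore $b(G) \le k = |A| + 2r$.

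The only delicate point is the bookkeeping of how long each source is allowed to spread: the construction must guarantee that even the last-lit center $v_m$, chosen in round $m$, still has a full $2r$ rounds of spreading remaining, which is exactly why the total length $m + 2r$ suffices. Once the off-by-one accounting of the burning rounds is pinned down, the rest follows directly from the triangle inequality and the covering characterization, so I do not anticipate a substantial obstacle beyond stating the reformulation cleanly.
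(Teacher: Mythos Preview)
Your proof is correct and follows essentially the same approach as the paper: use maximality and the triangle inequality to show every vertex lies within distance $2r$ of some vertex in $A$, then light the vertices of $A$ in the first $|A|$ rounds so that $2r$ additional rounds of spreading cover the whole graph. The only difference is cosmetic---you phrase it via the covering reformulation of $b(G)$, while the paper argues directly in terms of rounds---but the underlying argument is identical.
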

\begin{proof}
It suffices to prove that we can complete a burning process for $G$ in at most $|A|+2r$ rounds. First, observe that any vertex $v$ in $G$ that is not in $A$ is at distance at most $2r$ from some vertex in $A$, for otherwise, the vertices in $A \cup \{v\}$ would have disjoint $r$-neighborhoods, contradicting the maximality of $A$. Now, to burn $G$, in the first $|A|$ rounds, burn the vertices in $A$, one in each round (if we run out of vertices in $k<|A|$ rounds, burn a random vertex in each of the remaining at most $|A|-k$ rounds). Since every vertex in $G$ outside of $A$ is at distance at most $2r$ from some vertex in $A$, the burning process will end in at most $|A|+2r$ rounds.
\end{proof}

Another result in the literature that will be useful to us is the Tree Reduction Theorem \cite{bonato2016burn}. This tells us that if we want to know the burning number of a graph $G$, it suffices to look at the burning number of its spanning trees.

\begin{theorem}[Tree Reduction Theorem, \cite{bonato2016burn}]\label{thm:treereduction}
For a graph $G$, we have that
\[
b(G) = \min \{b(T) \ : \ T \mbox{ is a spanning tree of } G\}.
\]
\end{theorem}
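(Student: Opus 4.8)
The plan is to prove the two inequalities $b(G) \le \min_T b(T)$ and $\min_T b(T) \le b(G)$ separately, working throughout with the standard reformulation of a burning process as a covering by closed neighborhoods: a graph $H$ can be burned in $k$ rounds exactly when there is an ordered sequence of vertices $(x_1, \dots, x_k)$, the fire sources lit in rounds $1, \dots, k$, with $\bigcup_{i=1}^k N_{k-i}(x_i) = V(H)$, since the vertex $x_i$ lit in round $i$ has spread fire to distance $k-i$ by the end of round $k$. The one structural fact linking $G$ to any spanning tree $T$ is that every path of $T$ is a path of $G$, so $d_G(u,v) \le d_T(u,v)$ for all $u,v$, and hence $N_r^T(v) \subseteq N_r^G(v)$ for every $v$ and every $r$.

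The first inequality is immediate from this containment. If $(x_1,\dots,x_k)$ is a burning sequence of a spanning tree $T$ of length $k = b(T)$, then $V(G) = \bigcup_{i} N_{k-i}^T(x_i) \subseteq \bigcup_i N_{k-i}^G(x_i)$, so the same sequence burns $G$ in $k$ rounds, giving $b(G) \le b(T)$. Taking the minimum over all spanning trees yields $b(G) \le \min_T b(T)$.

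The reverse inequality is the substantive direction. Starting from an optimal burning sequence $(x_1, \dots, x_k)$ for $G$ with $k = b(G)$, I would record for each vertex $u$ the round $j(u) \le k$ in which it first catches fire, and attach $u$, when it is not itself a freshly lit source, to a neighbor $w$ that was already on fire at the end of round $j(u)-1$ (so $j(w) \le j(u)-1$). These parent edges form a spanning forest $F \subseteq G$ whose components are the BFS trees rooted at the vertices that originated each fire; following parent edges strictly decreases the fire-round at each step, so if $u$ lies in the component rooted at the source $x_i$ then $d_F(u,x_i) \le j(u) - i \le k - i$. Because $G$ is connected, I would then add edges of $G$ to $F$ to join its components into a single spanning tree $T$. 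Since $F \subseteq T$, each covering path survives, so $d_T(u,x_i) \le d_F(u,x_i) \le k-i$, and $(x_1,\dots,x_k)$ is a burning sequence for $T$ as well. Thus $b(T) \le k = b(G)$, giving $\min_T b(T) \le b(G)$, and combining the two inequalities completes the proof.

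The delicate point, and the step I expect to require the most care, is verifying that augmenting $F$ into a connected spanning tree $T$ does not destroy the burning. This is exactly where the monotonicity $F \subseteq T$ is used: adding edges can only increase tree-distances, yet each vertex's covering path to its assigned source already lies inside its own component of $F$ and is therefore untouched. One should also check the bookkeeping when a designated source $x_i$ is already on fire by round $i$, so that fewer than $k$ distinct fire origins arise; in that case that round contributes no new component root, $F$ has correspondingly fewer components, and the bound $j(u)-i \le k-i$ still holds for the origin actually responsible for $u$'s component.
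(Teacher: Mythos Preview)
The paper does not prove this theorem; it is quoted from \cite{bonato2016burn} as a tool, so there is no in-paper proof to compare against. Your argument is correct and is essentially the standard one: the forward inequality $b(G)\le b(T)$ is immediate from $d_G\le d_T$, and for the reverse you correctly build from an optimal burning sequence of $G$ a spanning forest $F$ of ``fire-parent'' edges, extend it to a spanning tree $T$, and observe that the same sequence burns $T$.

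One phrasing slip to fix: you write ``adding edges can only increase tree-distances,'' which is the opposite of the truth. What you actually need, and what you correctly used a few lines earlier, is that $F\subseteq T$ forces $d_T(u,x_i)\le d_F(u,x_i)$; equivalently, the unique $F$-path from $u$ to its component root $x_i$ lies entirely inside one $F$-component and is therefore also the unique $T$-path, so the distance is preserved exactly. With that correction the argument is complete. Your handling of the degenerate case in which some designated $x_i$ is already burning by round $i$ is also fine: such a vertex simply fails to be a root of $F$, and every vertex is still covered by the ball of the appropriate radius around the genuine origin of its $F$-component.
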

This in particular gives us the following useful consequence which we will exploit.

\begin{corollary}\label{cor:caterpillar}
Let $G$ be a graph on $n$ vertices in which every vertex has degree at least $d>\frac{n-1}{3}-1$. Then $G$ is well-burnable.
\end{corollary}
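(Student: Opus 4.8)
The plan is to apply Lemma~\ref{lem:maximal} with $r = 1$. The role of the degree hypothesis is to make every closed $1$-neighborhood large: for each vertex $v$ we have $|N_1(v)| = \deg(v) + 1 \ge d + 1 > \frac{n-1}{3}$, where the last inequality is precisely the assumption $d > \frac{n-1}{3} - 1$. Let $A$ be a maximal set of vertices of $G$ with pairwise disjoint closed $1$-neighborhoods, as furnished by the lemma. Since the sets $\{N_1(a) : a \in A\}$ are disjoint subsets of $V(G)$, summing their sizes gives
\[
n \;\ge\; \sum_{a \in A} |N_1(a)| \;>\; |A| \cdot \frac{n-1}{3},
\]
so that $|A| < \frac{3n}{n-1} = 3 + \frac{3}{n-1}$. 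For $n \ge 4$ the right-hand side is at most $4$, forcing $|A| \le 3$, and Lemma~\ref{lem:maximal} yields $b(G) \le |A| + 2r \le 5$. In particular, whenever $\lceil \sqrt{n}\,\rceil \ge 5$, i.e.\ whenever $n \ge 17$, we obtain $b(G) \le 5 \le \lceil \sqrt{n}\,\rceil$ and $G$ is well-burnable.

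It remains to treat $n \le 16$, and this is where I expect the main difficulty. The crude bound $b(G) \le 5$ no longer beats $\lceil \sqrt{n}\,\rceil$, which equals $4$ for $10 \le n \le 16$, equals $3$ for $5 \le n \le 9$, and equals $2$ for $n \le 4$; so the counting must be sharpened. Two refinements should close the gap for $10 \le n \le 16$. First, I would sharpen the bound on $|A|$: writing the minimum possible neighborhood size as $m = \lfloor \tfrac{n-1}{3}\rfloor + 1$, a short case analysis modulo $3$ shows that $|A| = 3$ is possible only when $3 \mid n$, and in that case the three disjoint neighborhoods have total size $3m = n$ and hence partition $V(G)$. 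Second, I would refine the conclusion of the lemma itself: if every vertex lies within distance $\rho$ of $A$ (rather than merely $2r$), then burning the vertices of $A$ in the first $|A|$ rounds completes the process in at most $|A| + \rho$ rounds, by exactly the argument proving Lemma~\ref{lem:maximal}. When the neighborhoods partition $V(G)$ we have $\rho = 1$, so the $|A| = 3$ case gives $b(G) \le 3 + 1 = 4$; combined with the bound $b(G) \le 4$ that holds whenever $|A| \le 2$, this settles every $n$ with $10 \le n \le 16$.

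For $n \le 9$ the degree hypothesis is too weak for the machinery alone to force the required bound of $3$ (or $2$), so these finitely many cases I would dispatch directly, using for instance $b(G) \le \rad(G) + 1$ together with the fact that connected graphs on so few vertices have small radius, or simply the known validity of the conjecture in this range. The genuinely delicate part of the argument is therefore not the main counting estimate but the reconciliation with $\lceil \sqrt{n}\,\rceil$ in the small-$n$ regime, where the constant bound $b(G) \le 5$ is too lossy and the coverage-radius refinement (together with the divisibility observation for $|A| = 3$) is what is needed.
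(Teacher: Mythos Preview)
Your approach is correct for $n \ge 10$ but is \emph{entirely different} from the paper's. The paper's proof is two lines: the degree hypothesis $d > \tfrac{n-1}{3}-1$ guarantees that $G$ has a spanning caterpillar (a result of Broersma), caterpillars are known to be well-burnable, and the Tree Reduction Theorem then gives $b(G)\le b(T)\le\lceil\sqrt{n}\,\rceil$. No counting of neighborhoods, no case analysis on $n$, no small cases to mop up.

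Your route via Lemma~\ref{lem:maximal} with $r=1$ is sound: the bound $|A|\le 3$, the mod-$3$ observation that $|A|=3$ forces $3m=n$ and hence a partition of $V(G)$ by the closed neighborhoods, and the coverage-radius refinement $b(G)\le |A|+\rho$ are all valid, and together they deliver $b(G)\le 4$ for $10\le n\le 16$. What your argument buys is self-containment---it avoids the spanning-caterpillar theorem---at the price of needing the small-$n$ cases by hand.

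There is one slip in your handling of $n\le 9$: the suggestion to use $b(G)\le \rad(G)+1$ does not work as stated. For $7\le n\le 9$ the hypothesis only forces $d\ge 2$, and the cycle $C_9$ has radius $4$, giving $b(C_9)\le 5$ by that bound---useless against $\lceil\sqrt{9}\,\rceil=3$. So you genuinely need the Burning Number Conjecture for all connected graphs on at most $9$ vertices. This is true and finitely checkable (and the paper verifies it for trees on at most $7$ vertices, whence by Tree Reduction for all $n\le 7$), but for $n=8,9$ it is not established anywhere in the paper, so you would have to carry out that verification explicitly or cite it from elsewhere.
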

\begin{proof}
The condition that $d>\frac{n-1}{3}-1$ implies by \cite{broersma1988existence} that $G$ has a spanning caterpillar, say $T$. By \cite{liu2020burning}, $T$ is well-burnable. However by Theorem~\ref{thm:treereduction}, $b(G) \leq b(T)$ and so $G$ is well-burnable.
\end{proof}

Finally, we recall what is known about the burning number of graphs that have degree lower bounds. A breakthrough result in this direction was proven in \cite{kamali2020burning}.
\begin{theorem}[\cite{kamali2020burning}]\label{thm:23}
If $G$ is a connected graph in which every vertex has degree at least $23$ then $G$ is well-burnable.
\end{theorem}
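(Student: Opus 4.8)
The plan is to derive the bound directly from Lemma~\ref{lem:maximal}, which guarantees $b(G) \le |A| + 2r$ whenever $A$ is a maximal family of vertices with pairwise disjoint closed $r$-neighborhoods. I would exploit the degree hypothesis to force each such neighborhood to be large, which caps $|A|$, and then balance $|A|$ against $2r$ by tuning $r$ as a function of $n$. Concretely, if every vertex of $A$ satisfies $|N_r(v)| \ge g(r)$ for some growth function $g$, then disjointness gives $\sum_{v \in A} |N_r(v)| \le n$, hence $|A| \le n/g(r)$, and so $b(G) \le n/g(r) + 2r$. The whole argument then reduces to (i) producing a good lower bound $g(r)$ from the minimum-degree condition, and (ii) choosing $r$ to minimize $n/g(r) + 2r$.

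For step (i), I would show that in a connected graph of minimum degree $\delta$, a closed $r$-neighborhood either covers all of $V(G)$ or grows linearly in $r$. Fix $v$ and, assuming $\operatorname{ecc}(v) \ge r$, take a shortest path $v = v_0, v_1, \dots, v_r$ out of $v$ together with the distance levels $L_k = \{u : d(v,u) = k\}$. The closed $1$-neighborhood of each $v_i$ is contained in $L_{i-1} \cup L_i \cup L_{i+1}$, and the neighborhoods of the path vertices $v_i$ with $i \equiv 1 \pmod 3$ are pairwise disjoint, since such vertices are at mutual distance at least $3$ on a shortest path. Each of these neighborhoods has at least $\delta + 1$ vertices and lies inside $N_r(v)$ as soon as $i + 1 \le r$. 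Counting the admissible indices yields $|N_r(v)| \ge (\delta+1)\lfloor (r+1)/3 \rfloor$; and if instead $\operatorname{ecc}(v) < r$, then $N_r(v) = V(G)$ and $|A| = 1$, which is even better. This Erd\H{o}s--Pach--Pollack--Tuza style neighborhood-expansion estimate is the technical heart of the argument.

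With $g(r) \approx (\delta+1) r/3$ in hand, step (ii) is a one-parameter optimization. Writing $r \approx c\sqrt n$, the bound becomes $b(G) \lesssim \tfrac{3}{(\delta+1)c}\sqrt n + 2c\sqrt n = \bigl(\tfrac{3}{(\delta+1)c} + 2c\bigr)\sqrt n$, whose minimum over $c$ is $2\sqrt{6/(\delta+1)}\,\sqrt n$, attained at $c = \sqrt{3/(2(\delta+1))}$. The decisive arithmetic fact is that $2\sqrt{6/(\delta+1)} \le 1$ exactly when $\delta + 1 \ge 24$, i.e.\ when $\delta \ge 23$; this is precisely where the constant $23$ originates. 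Taking $r = \lceil \sqrt n/4 \rceil$ (the optimal choice at $\delta+1 = 24$) should then yield $b(G) \le \sqrt n \le \lceil \sqrt n \rceil$, establishing well-burnability.

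The main obstacle is that, because the optimized leading constant is \emph{exactly} $1$ at the threshold $\delta = 23$, there is no asymptotic slack to absorb the lower-order error terms coming from the floors and the ceiling in $g(r)$ and in the choice of $r$. Handling these honestly is the delicate part: one must verify $n/g(r) + 2r \le \lceil \sqrt n \rceil$ for all sufficiently large $n$, leaning on the $\lceil\,\cdot\,\rceil$ rounding to swallow the $O(1)$ discrepancies. The remaining small values of $n$ should be dispatched separately, and conveniently the hypothesis $\delta \ge 23$ forces every vertex to have degree $d = 23 > \frac{n-1}{3} - 1$ whenever $n \le 72$, so Corollary~\ref{cor:caterpillar} already guarantees well-burnability there; I would then check that the large-$n$ regime of the optimization overlaps this range, so that the two arguments together cover every $n$.
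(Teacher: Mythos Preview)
The paper does not give its own proof of this statement; Theorem~\ref{thm:23} is quoted from \cite{kamali2020burning}. Your plan is exactly the argument used there, and it is also the special instance of the paper's general framework in Theorem~\ref{thm:main}: the linear neighborhood growth $|N_r(v)|\ge(\delta+1)\lfloor(r+1)/3\rfloor$ is the tethering, and your optimization $n/g(r)+2r$ with minimizer $m=\sqrt{3n/(2(\delta+1))}$ is literally the function $g_{n,d}$ of Theorem~\ref{thm:main}. So at the level of strategy there is nothing to compare; you have reconstructed the intended approach.

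Where your proposal has a genuine gap is precisely the place you yourself flag as ``the main obstacle,'' and it is the same gap the paper points out in its remark following the proof of Theorem~\ref{thm:main}. Because the optimized leading constant equals $1$ \emph{exactly} at $\delta=23$, the $O(1)$ losses from the floor in $g(r)=24\lfloor(r+1)/3\rfloor$ and from forcing $r$ to be an integer cannot be absorbed by the ceiling: $\lceil\sqrt{n}\rceil-\sqrt{n}=0$ at perfect squares, so there is no rounding slack. Your concrete claim that $r=\lceil\sqrt{n}/4\rceil$ ``should then yield $b(G)\le\sqrt{n}$'' is false as stated; for instance at $n=169$ one gets $r=4$, $g(r)=24$, and $\lfloor n/g(r)\rfloor+2r=7+8=15>13=\lceil\sqrt{n}\rceil$. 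The paper says explicitly that carrying the integer rounding through honestly via Theorem~\ref{thm:main} only recovers the threshold $36$ rather than $23$. In short: your outline matches the cited proof, you correctly locate its delicate point, but your proposed resolution of that point does not work, and the paper itself records that this framework alone does not reach $23$.
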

This result was improved significantly for connected graphs on sufficiently many vertices in a recent article \cite{improvedburning}.
\begin{theorem}[\cite{improvedburning}]\label{thm:improved1}
If $G$ is a connected graph on sufficiently many vertices, and every vertex in $G$ has degree at least $4$, then $G$ is well-burnable.
\end{theorem}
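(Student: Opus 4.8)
The natural engine for a theorem of this type is Lemma~\ref{lem:maximal}: for any fixed $r$, taking a maximal family $A$ of vertices with pairwise disjoint closed $r$-neighborhoods yields $b(G) \le |A| + 2r$, and because the neighborhoods $\{N_r(a) : a \in A\}$ are disjoint we have $|A|\cdot \min_{a\in A}|N_r(a)| \le n$, so $|A| \le n/\min_{a}|N_r(a)|$. The plan is therefore to convert the minimum-degree hypothesis into a lower bound on closed $r$-neighborhood sizes, choose $r$ to balance the two terms, and verify the sum stays below $\lceil\sqrt n\,\rceil$. First I would analyze the BFS layers $L_0,L_1,\dots$ around a vertex $v$: since $G$ is connected the ball grows until it exhausts $V$, and the degree-$4$ hypothesis forces additional growth, since every vertex of a layer $L_i$ sends at least four edges into $L_{i-1}\cup L_i\cup L_{i+1}$. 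Packaging this into a clean linear estimate $|N_r(v)| \ge c\,r$ is the first technical step.

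The difficulty is that minimum degree $4$ delivers only a modest slope. The square of a long cycle $C_n^{2}$ has every vertex of degree $4$ yet $|N_r(v)| = 4r+1$, and chaining copies of $K_5$ by single edges produces even sparser growth, so no uniform bound with slope better than a small constant is available. A short computation shows that substituting $|N_r(v)|\ge c\,r$ into $n/(c r) + 2r$ and optimizing over $r$ gives a minimum of $2\sqrt{2n/c}$, which falls to $\sqrt n$ only once $c \ge 8$. Thus Lemma~\ref{lem:maximal} fed a uniform ball bound cannot by itself close the gap when $\delta = 4$ (this is exactly why Theorem~\ref{thm:23} needs degree $23$), and a case split is forced precisely by the graphs whose balls grow slowly. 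I expect the main obstacle to live here: slow ball growth is synonymous with the graph being \emph{path-like}, and such graphs must be handled by a different mechanism.

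To dispatch the slow-growth case I would invoke the Tree Reduction Theorem (Theorem~\ref{thm:treereduction}), reducing the problem to exhibiting a single spanning tree of small burning number. If $\rad(G)$ is at most roughly $\sqrt n$, then lighting one central vertex burns everything within $\rad(G)$ rounds, so $b(G)\le \rad(G)+1\le \lceil\sqrt n\,\rceil$ and we are done; hence I may assume $G$ has large radius and therefore contains a geodesic of length $\gtrsim\sqrt n$. Such a long geodesic is the witness of path-like structure that should let us build a spanning tree resembling a path or caterpillar, whose burning number is controlled by the $\lceil\sqrt{\,\cdot\,}\rceil$ bound for paths and by Corollary~\ref{cor:caterpillar} — note indeed that the extremal slow-growth examples above ($C_n^2$, chains of cliques) all carry Hamiltonian paths. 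The endgame is to show the two regimes overlap for all large $n$: either the balls grow fast enough that Lemma~\ref{lem:maximal} with a tuned $r=\Theta(\sqrt n)$ already gives $|A|+2r\le \lceil\sqrt n\,\rceil$, or the graph is path-like enough for Theorem~\ref{thm:treereduction} to finish, improving the general $\sqrt{4n/3}$ constant down to $1$. The hypothesis of \emph{sufficiently many vertices} is what absorbs the additive constants and the ceiling so that these balanced estimates land at or below $\lceil\sqrt n\,\rceil$; the hardest and most delicate part will be making the fast-growth and path-like bounds meet cleanly across the entire range of $n$ while assuming only $\delta=4$.
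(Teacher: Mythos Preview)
The paper does not prove Theorem~\ref{thm:improved1}; it is quoted from \cite{improvedburning} as a preliminary result and no argument is supplied here. So there is no ``paper's own proof'' to compare your proposal against, and your sketch should be judged on its own merits rather than against anything in this manuscript.

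On those merits, you have correctly diagnosed why the Lemma~\ref{lem:maximal} machinery alone cannot reach $\delta=4$: optimizing $n/(cr)+2r$ gives $2\sqrt{2n/c}$, and $c\ge 8$ is needed, whereas $\delta=4$ yields only $|N_r(v)|\ge 4r+1$ in the worst case. Your proposed escape, a dichotomy between ``balls grow fast'' and ``graph is path-like,'' is the right instinct but the second branch is not an argument. Having radius $\gtrsim\sqrt n$, or even a geodesic of that length, does not by itself produce a spanning caterpillar or any spanning tree with controlled burning number; Corollary~\ref{cor:caterpillar} needs minimum degree exceeding $(n-1)/3-1$, which is vastly stronger than $\delta=4$, and the examples you cite ($C_n^2$, clique chains) happen to be Hamiltonian for reasons unrelated to their slow ball growth. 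A $4$-regular graph can have large radius and yet be far from possessing a spanning path or caterpillar. So the proposal names the obstacle honestly but does not clear it: the ``path-like'' branch is precisely where the content of \cite{improvedburning} lives, and their argument is not the one you outline.
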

In that same article, connected graphs whose vertices have degree at least $3$ are almost proven to be well-burnable.
\begin{theorem}[\cite{improvedburning}]\label{thm:improved2}
If $G$ is a connected graph on $n$ vertices in which every vertex has degree at least $3$, then $b(G) \leq \lceil \sqrt{n} \ \rceil + 2.$
\end{theorem}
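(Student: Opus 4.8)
The plan is to work from the standard reformulation of burning in terms of ball covers: $b(G) \le k$ precisely when $V(G)$ can be covered by closed balls of radii $k-1, k-2, \ldots, 1, 0$, since a source ignited in round $i$ has spread to radius $k-i$ by the final round~$k$. With $k = \lceil \sqrt{n} \ \rceil + 2$ as the target, I would aim to exhibit such a cover.

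First I would record the local consequence of the degree hypothesis. Running a breadth-first search from any vertex $v$ and using that every vertex has degree at least $3$, one checks that consecutive layers cannot collapse too quickly, so that $|N_r(v)|$ grows at least linearly in $r$ until it exhausts $V(G)$; the prism graphs $C_m \square K_2$ show that linear growth is the correct order, so no super-linear lower bound is available in general. This already feeds into Lemma~\ref{lem:maximal}: taking a maximal family $A$ of vertices with pairwise-disjoint closed $r$-neighborhoods, disjointness forces $|A|$ times this linear lower bound to be at most $n$, whence $|A| \le n/f(r)$ with $f(r) = \Theta(r)$, and therefore $b(G) \le |A| + 2r \le n/f(r) + 2r$.

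The difficulty is that optimizing this single-radius bound over $r$ only yields a bound of the form $c\sqrt{n}$ with a constant $c > 1$ strictly larger than the target. The prism itself exposes why this is lossy: burned along a single scale it looks expensive, yet burning it with sources whose radii range over all of $0, 1, \ldots, k-1$ covers it in only about $\sqrt{n/2}$ rounds, comfortably within the target. So I would replace the single-radius packing by a multi-scale cover: process the available radii from largest to smallest, and at each step greedily place the next source at an uncovered vertex so that its ball of the current radius absorbs as many still-uncovered vertices as the minimum-degree-$3$ lower bound guarantees. The goal is an amortized count showing that after the radii $k-1$ down to $0$ are exhausted the uncovered set is empty whenever $k \ge \lceil \sqrt{n} \ \rceil + 2$, with the final few small-radius balls (the source of the additive $+2$) mopping up residual vertices near the boundaries of the large balls.

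I expect the main obstacle to be exactly this amortized charging argument: one must control the overlaps between successively placed balls and guarantee that each new ball of radius $\rho$ genuinely contributes on the order of $\rho$ fresh vertices rather than re-covering already-burned ones, uniformly over all minimum-degree-$3$ graphs and not merely the near-extremal prisms. Driving the leading constant down to exactly $1$ is where the real work lies, and I would anticipate needing a case split according to whether the selected balls grow at the bare linear rate or faster: the "thin, path-like" regime, where the graph behaves locally like a subdivided prism or cycle and burns nearly optimally, would be handled separately from the "thick" regime, where balls are large and a coarser packing via Lemma~\ref{lem:maximal} already suffices.
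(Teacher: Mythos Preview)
This theorem is not proved in the present paper at all: it is quoted from~\cite{improvedburning} and used as a black box (e.g.\ in the proof of Theorem~\ref{thm:trees}). So there is no ``paper's own proof'' to compare your proposal against.

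As for the proposal itself, it is not a proof but a programme. You correctly set up the ball-cover reformulation and correctly observe that the single-scale packing of Lemma~\ref{lem:maximal} with a linear neighbourhood lower bound only gives $b(G)\le c\sqrt{n}$ for some $c>1$; this is exactly why Theorem~\ref{thm:main} of this paper, applied to the minimum-degree-$3$ class, does not recover the $\lceil\sqrt{n}\,\rceil+2$ bound. But everything after that is aspirational: you say you ``would replace'' the single-radius argument by a multi-scale greedy cover, that ``the goal is an amortized count'', that you ``expect the main obstacle'' to be the charging argument, and that you ``would anticipate needing a case split''. None of these steps is carried out, and you explicitly flag that ``driving the leading constant down to exactly $1$ is where the real work lies''. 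That real work is the entire content of the theorem; the rest is framework that was already known. In particular, your proposed greedy scheme---place balls of radii $k-1,k-2,\ldots$ and hope each contributes $\Theta(\rho)$ fresh vertices---has no mechanism preventing later balls from landing almost entirely inside earlier ones, and the vague thin/thick dichotomy does not supply one. So as written this is a reasonable description of why the problem is hard, not a proof.
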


\section{Graded Families of Graphs}\label{sec:families}

A \emph{graded family of connected graphs} $\G$ is a class of connected graphs stratified by the positive integers, written as a union $\displaystyle \G=\cup_{d=1}^{\infty} \G_d$. In this section we develop machinery that takes in a graded family $\mathcal{G}$ and generates necessary conditions for $\G_d$ to be well-burnable for sufficiently large $d$. For our first main theorem, the key insight is inspired by our observation from Lemma~\ref{lem:maximal}: if one can get a universal lower bound on the number of vertices in any closed $r$-neighborhood of any vertex in a graph, then one can get an upper bound on the burning number. This motivates our next definition.

\begin{definition}
Let $\G$ be a graded family of connected graphs. We say that $\G$ is \emph{tethered} if there exist functions $f_1(x),f_2(x),\dots$ defined on $[1,\infty)$ so that for any positive integer $d$, any $G \in \G_d$, any $r \in \{1,2,\ldots,\rad(G)\}$, and any $v \in V(G)$, we have $|N_r(v)| \geq f_d(r).$ We call the functions $\{f_d(x)\}_{d \geq 1}$ a \emph{tethering} for $\G$.
\end{definition}
With the concept of a tethering established, we can now present our first main theorem.
\begin{theorem}\label{thm:main}
Let $\G$ be a graded family of connected graphs. Furthermore, suppose $\G$ is tethered with tethering $\{f_d(x)\}_{d \geq 1}$. For fixed positive integers $n,d$, consider the function $g_{n,d}(x)$ defined on $[1,\infty)$  by
\[
g_{n,d}(x) = \frac{n}{f_d(x)} + 2x
\]
and suppose $g_{n,d}(x)$ achieves a minimum at $m_{n,d} \in [1,\infty)$. Let $M_{n,d}$ be any upper bound on the two values
\[
\lceil m_{n,d} \rceil, \hspace{0.1in} \min \left \{g_{n,d} \left( \lfloor m_{n,d} \rfloor \right), g_{n,d} \left( \lceil m_{n,d} \rceil \right)\right \}.
\]
Then for any graph $G \in \G_d$ on $n$ vertices, $b(G) \leq M_{n,d}.$
\end{theorem}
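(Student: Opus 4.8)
The plan is to combine Lemma~\ref{lem:maximal} with the tethering lower bound, optimizing the radius parameter $r$ over the integers, and then to handle the mismatch between the real minimizer $m_{n,d}$ and the admissible integer range for $r$. First I would fix a graph $G \in \G_d$ on $n$ vertices and show that for \emph{every} integer $r \in \{1,2,\ldots,\rad(G)\}$ the burning number is bounded by $g_{n,d}(r)$. Indeed, let $A$ be a maximal set of vertices with pairwise disjoint closed $r$-neighborhoods. Since these neighborhoods are disjoint and each contains at least $f_d(r)$ vertices by the tethering hypothesis (applicable precisely because $r \leq \rad(G)$), counting vertices gives $|A| \cdot f_d(r) \leq n$, hence $|A| \leq n/f_d(r)$ as $f_d(r)>0$. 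Lemma~\ref{lem:maximal} then yields
\[
b(G) \leq |A| + 2r \leq \frac{n}{f_d(r)} + 2r = g_{n,d}(r).
\]

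With this family of bounds in hand, the idea is to choose $r$ as close as possible to the real minimizer $m_{n,d}$ of $g_{n,d}$. The only obstruction is that $r$ must be a positive integer lying in $\{1,\ldots,\rad(G)\}$, whereas $m_{n,d}$ is real and could in principle exceed $\rad(G)$. This is exactly why the two quantities bounded by $M_{n,d}$ appear, and I would split into two cases accordingly. If $\lceil m_{n,d}\rceil \leq \rad(G)$, then both $\lfloor m_{n,d}\rfloor$ and $\lceil m_{n,d}\rceil$ are admissible integer values of $r$ (they lie in $[1,\rad(G)]$ since $m_{n,d}\geq 1$), so applying the displayed bound to each gives $b(G) \leq \min\{g_{n,d}(\lfloor m_{n,d}\rfloor),\, g_{n,d}(\lceil m_{n,d}\rceil)\} \leq M_{n,d}$.

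For the remaining case $\lceil m_{n,d}\rceil > \rad(G)$, the integer optimization of the radius no longer reaches the minimizer, and I would instead fall back on the elementary bound $b(G) \leq \rad(G) + 1$: lighting a center vertex in the first round, the fire reaches all of $V(G)$ within $\rad(G)$ further rounds regardless of the later choices. Since $\lceil m_{n,d}\rceil > \rad(G)$ forces $\lceil m_{n,d}\rceil \geq \rad(G) + 1$, we obtain $b(G) \leq \rad(G) + 1 \leq \lceil m_{n,d}\rceil \leq M_{n,d}$, which closes the argument.

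The main obstacle to watch is precisely this interplay between the continuous optimization of $g_{n,d}$ and the discreteness and range constraints on $r$: one must confirm that the two nearby integers are genuinely admissible radii in the first case, and that the radius bound cleanly dominates $\lceil m_{n,d}\rceil$ in the second. Everything else reduces to the disjoint-neighborhood counting and a single invocation of Lemma~\ref{lem:maximal}.
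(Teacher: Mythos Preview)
Your proof is correct and follows essentially the same approach as the paper's: establish $b(G)\le g_{n,d}(r)$ for every admissible integer $r$ via Lemma~\ref{lem:maximal} and the tethering bound, then split into cases depending on whether the minimizer lies below $\rad(G)$, handling the small-radius case by burning from a center. The only cosmetic difference is that the paper splits at $m_{n,d}$ rather than at $\lceil m_{n,d}\rceil$, while your use of $b(G)\le\rad(G)+1$ together with the integer inequality $\lceil m_{n,d}\rceil\ge\rad(G)+1$ is arguably cleaner in that branch.
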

At first glance, Theorem~\ref{thm:main} seems very general and opaque. To illustrate its effectiveness, we apply it to a particular graph class. Let $\mathcal{TF}_d$ be the set of connected triangle-free graphs (i.e. graphs with no $3$-cycles) whose vertices have degree at least $d$. We use a combination of Theorem~\ref{thm:main} and a strategic tethering to establish the following:
\begin{theorem}\label{thm:trianglefree}
If $d \geq 12$ then any graph $G \in \mathcal{TF}_d$ is well-burnable.
\end{theorem}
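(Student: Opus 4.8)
The plan is to exhibit an explicit tethering for the family $\TF=\cup_d \TF_d$ and then read off the degree threshold by optimizing the function $g_{n,d}$ supplied by Theorem~\ref{thm:main}. The one structural fact I would lean on is that in a triangle-free graph any two adjacent vertices $u,w$ have disjoint neighborhoods: a common neighbor of $u$ and $w$ would close a triangle with the edge $uw$. Since each of $N(u)$ and $N(w)$ has at least $d$ elements, this immediately gives $|N(u)\cup N(w)|\ge 2d$, and all of these vertices lie within distance $2$ of either endpoint. The goal is to convert this local doubling into a lower bound on $|N_r(v)|$ that grows linearly in $r$ with the best constant I can extract, and to hand that off as $f_d$ so that Theorem~\ref{thm:main} (via Lemma~\ref{lem:maximal}) does the rest.

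To produce $f_d$, I would fix $v$ and $r\le \rad(G)$, run a breadth-first search from $v$, and write $n_i$ for the number of vertices at distance exactly $i$, so that $|N_r(v)|=\sum_{i=0}^r n_i$. Taking a geodesic $v=x_0,x_1,x_2,\ldots$ out of $v$ and applying the disjointness fact to the edge $x_ix_{i+1}$, whose combined neighborhood is contained in the union of the four consecutive distance classes around $i$, yields the window inequality $n_{i-1}+n_i+n_{i+1}+n_{i+2}\ge 2d$ for every admissible $i$, together with the base estimates $n_0=1$ and $n_0+n_1\ge d+1$. Summing these windows across the first $r$ layers produces a tethering of the shape $f_d(r)=\tfrac{d}{2}\,r+O(d)$, the honest bound that disjoint neighborhoods deliver.

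With such an $f_d$ in hand, Theorem~\ref{thm:main} reduces everything to the one-variable minimization of $g_{n,d}(x)=n/f_d(x)+2x$. For a linear tethering $f_d(x)\approx \tfrac{d}{2}x$ the minimizer sits near $m_{n,d}\approx\sqrt{n/d}$ and the minimum value is about $4\sqrt{n/d}$, so the comparison $g_{n,d}(m_{n,d})\le \lceil \sqrt{n}\,\rceil$ that I need collapses, in the large-$n$ limit, to a clean inequality in $d$ alone. Carrying out this optimization exactly — tracking the floors and ceilings in the definition of $M_{n,d}$ and disposing of the finitely many small-$n$ cases by hand or by absorbing them into Theorem~\ref{thm:23} — is routine once the tethering is fixed. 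The genuine obstacle is the constant in the tethering: a naive layer count gives slope only about $d/2$, and the threshold this produces is weaker than $d\ge 12$. Pushing down to $d=12$ forces me to squeeze more out of triangle-freeness than the single four-layer window provides — for instance by controlling how fast the neighborhoods can fold back on themselves, or by designing a sharper, possibly piecewise, $f_d$ that is strongest precisely in the range of $r$ near the optimizer $\sqrt{n/d}$. Pinning down that improved neighborhood estimate, and verifying that it remains a valid universal lower bound over all $v$ and all $r\le\rad(G)$, is where I expect the real work to lie.
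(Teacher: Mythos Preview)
Your proposal is not yet a proof, and you say so yourself: the four-layer window inequality $n_{i-1}+n_i+n_{i+1}+n_{i+2}\ge 2d$ yields a tethering with slope only about $d/2$, and the resulting optimization of $g_{n,d}$ gives roughly $4\sqrt{n/d}$, which beats $\sqrt{n}$ only for $d\gtrsim 16$. You then leave the sharpening needed to reach $d=12$ as unfinished work, so the central estimate is simply missing.

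The paper's proof differs precisely at this point: rather than pairing two adjacent vertices to harvest $2d$ vertices across four layers, it argues that the entire $2$-ball of any vertex already contains at least $d^2+1$ vertices (the first layer $D_1$ is independent by triangle-freeness, so each of its $\ge d$ members sends $\ge d-1$ edges into $D_2$), and then tiles a geodesic with disjoint five-layer blocks to obtain $f_d(r)=\lfloor (r+3)/5\rfloor(d^2+1)$. This \emph{quadratic-in-$d$} slope is what drives the threshold down: the minimum of $g_{n,d}$ becomes $2\sqrt{10}\,\sqrt{n/(d^2+1)}+O(1)$, which falls below $\sqrt n$ once $d^2+1>40$, and Corollary~\ref{cor:caterpillar} then disposes of the small-$n$ cases. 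So the idea you are looking for is not a refinement of your edge-based window but a switch to the full $2$-ball estimate. One caution if you adopt this line: the paper's stated reason for $|D_2|\ge d(d-1)$---that no two vertices in $D_2$ can share a neighbor in $D_1$---is not what triangle-freeness actually forbids, and indeed $K_{d,d}$ has $|N_2(v)|=2d$; so the quadratic bound needs a more careful formulation (e.g.\ handling separately the case where the $2$-ball exhausts $V(G)$) before it can serve as a valid tethering.
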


A few key observations should be made about Theorem~\ref{thm:trianglefree} in relation to the graph burning literature. Firstly, we notice that Theorem~\ref{thm:trianglefree} improves Theorem~\ref{thm:23} for a specific graph class. Secondly, Theorem~\ref{thm:trianglefree} came to light at the same time as Theorem~\ref{thm:improved2}, however Theorem~\ref{thm:trianglefree} has no stipulation on sufficient vertex size whatsoever. Thirdly, the true motivation for Theorem~\ref{thm:trianglefree} is to demonstrate the power of the overall paradigm afforded by Theorem~\ref{thm:main} and its potential to be used in many different settings in the future.

\begin{proof}
We first find a tethering for $\TF = \bigcup_{d \geq 1} \TF_d$, the class of connected triangle-free graphs. Let $G \in \TF_d$ and $v \in V(G)$. Since $v$ has degree at least $d$, $|N_1(v)| \geq d+1$. Now for any $i$ let $D_i$ be the set of vertices of distance exactly $i$ from $v$. Then $N_2(v) = D_0 \cup D_1 \cup D_2$. Furthermore, since $D_1$ has at least $d$ vertices, every vertex in $D_1$ has degree at least $d$, and no two vertices in $D_2$ can be adjacent to the same vertex in $D_1$ (otherwise $G$ would have a $3$-cycle), we deduce $|D_2| \geq d(d-1)$. From this, 
\begin{equation}\label{eqn:neighborhood1}
|N_2(v)|=|D_0 \cup D_1 \cup D_2| \geq d^2+1 =  \left \lfloor \frac{2+3}{5} \right \rfloor  (d^2+1).
\end{equation}
Furthermore, for $r \in \{3,4,5,6\}$ we have 
\begin{equation}\label{eqn:neighborhood2}
|N_r(v)| \geq |N_2(v)| \geq d^2+1 =  \left \lfloor \frac{r+3}{5} \right \rfloor  (d^2+1).
\end{equation}
Now suppose $r \geq 7$. For $i \in \{1,2,\ldots,\lfloor \frac{r+3}{5} \rfloor -1\}$ define $D'_i =  \bigcup_{j=5i-2}^{5i+2} D_j.$ The vertices in $D'_i$ are all within distance $r$ from $v$ because $5i+2 \leq 5 \left( \lfloor \frac{r+3}{5} \rfloor -1\right)+2 \leq (r+3)-5+2=r$. Select a vertex $v_i \in D_{5i}$. Since $v_i$ has degree at least $d$ and each of its neighbors have degree at least $d$, $v_i$ together with its neighbors and their neighbors comprise at least $1+d+d(d-1)=d^2+1$ vertices, and all these vertices lie in $D_i'$. Subsequently, letting $D_0' = D_0 \cup D_1 \cup D_2$, $M = \left \lfloor \frac{r+3}{5} \right \rfloor -1 $, and using the fact that $D_0',D_1',\ldots,D_M'$ are pairwise disjoint, we have
\begin{equation}\label{eqn:neighborhood3}
|N_r(v)| \geq \sum_{i=0}^M |D_i'| \geq (M+1) \cdot (d^2+1) = \left \lfloor \frac{r+3}{5} \right \rfloor (d^2+1).
\end{equation}
From our lower bound for $|N_1(v)|$, equation~(\ref{eqn:neighborhood1}), equation~(\ref{eqn:neighborhood2}) and equation~(\ref{eqn:neighborhood3}), we get a tethering $\{f_d(x)\}_{d \geq 1}$ for $\TF$ given by
\[
f_d(x) = \begin{cases}
d+1 \hspace{0.5in} &\mbox{ if } 1 \leq x < 2 \\
\left \lfloor \frac{x+3}{5} \right \rfloor (d^2+1) \hspace{0.5in} &\mbox{ if } 2 \leq x.
\end{cases}
\]
Applying Theorem~\ref{thm:main}, we can construct $g_{n,d}$ as
\[
g_{n,d}(x) = \begin{cases}
\frac{n}{d+1}+2 \hspace{0.5in} &\mbox{ if } 1 \leq x < 2 \\
\frac{n}{\left \lfloor \frac{x+3}{5} \right \rfloor (d^2+1)} + 2x \hspace{0.5in} &\mbox{ if } 2 \leq x.
\end{cases}
\]
and we search for $m_{n,d}$. Notice $g_{n,d}(x)$ is constant on $[1,2)$, and for such $x$ the difference $g_{n,d}(x)-g_{n,d}(2)$ is $\frac{n}{d+1} - \frac{n}{d^2+1}-2$. This difference is nonnegative precisely when $n \geq 2d+4 + \frac{6d+2}{d^2-d}$. By Corollary~\ref{cor:caterpillar} we can assume $n \geq 3d+4$, so $n \geq 2d+4 + \frac{6d+2}{d^2-d}$ since $d \geq \frac{6d+2}{d^2-d}$ for $d \geq 12$. These arguments tell us that $m_{n,d}$ is the minimizer of \[\frac{n}{\left \lfloor \frac{x+3}{5} \right \rfloor (d^2+1)} + 2x\]
in the interval $[2,\infty)$. Replacing $x$ by $5x-3$ and observing that $\lfloor x \rfloor$ is constant in $[k,k+1)$ for a positive integer $k$, whereas $5x-3$ increases on $[k,k+1)$, we can select the minimizer $m_{n,d}$ to be the minimizer of the following function over $[1,\infty)$
\[
s(x) = \frac{n}{x (d^2+1)} + 2(5x-3).
\]
This function is convex in $x$ and so its minimizer occurs when $\frac{d}{dx} s(x)=0$, which sets $m_{n,d} = \sqrt{\frac{n}{10(d^2+1)}}.$ We can make the assumption that $m_{n,d} \geq 1$. This is because if $m_{n,d}<1$ then $b(G) \leq g_{n,d}(2) = \frac{n}{d^2+1}+4<14$ so $b(G) \leq 13$, whereas $n \geq d^2+1 > 12^2$ so $\lceil \sqrt{n} \rceil \geq 13$. Now 
\begin{align*}
g_{n,d}(\lceil m_{n,d} \rceil) &= \frac{n}{\left \lceil \sqrt{\frac{n}{10(d^2+1)}} \right \rceil \cdot (d^2+1)} + 2 \left( 5 \left \lceil \sqrt{\frac{n}{10(d^2+1)}} \ \right \rceil - 3 \right) \\
&\leq 2\sqrt{10} \cdot \sqrt{\frac{n}{d^2+1}} + 4.
\end{align*}
By Theorem~\ref{thm:main}, $b(G) \leq 2\sqrt{10} \cdot \sqrt{\frac{n}{d^2+1}} +4 $ so $G$ is well-burnable if 
\[
\sqrt{n} \geq \frac{4}{1 - \sqrt{\frac{40}{d^2+1}}}.
\]
By Corollary~\ref{cor:caterpillar}, this forces every graph in $\mathcal{TF}_d$ to be well-burnable provided that $\sqrt{3d+4}$ exceeds the lower bound on the right, which occurs when $d \geq 11$. So altogether, if $d \geq 12$ any graph in $\mathcal{TF}_d$ is well-burnable.
\end{proof}

How can we use Theorem~\ref{thm:main} in more general settings? Theorem~\ref{thm:23} and Theorem~\ref{thm:improved1} are like Theorem~\ref{thm:trianglefree} in that they require degree lower bounds on the input graphs. However, there are many settings where there is no universal lower bound on degrees, for instance because of the existence of degree $1$ vertices. There may still however be lower bounds for the number of vertices in $r$-neighborhoods for $r \geq 2$. We could expect this for instance in a graph with high average degree but a few outlier vertices with very low degree. We can specialize Theorem~\ref{thm:main} to account for this by looking at graph classes that have a tethering $\{f_d(x)\}_{d \geq 1}$ where each function is linear in $x$ for $x \geq 2$.

\begin{theorem}\label{thm:main2}
Let $\G$ be a graded family of connected graphs with tethering $\{f_d(x)\}_{d \geq 1}$. Let $h(\cdot)$ be a function that is positive on $[1,\infty)$. Suppose that for every positive integer $d$,
\[f_d(x)=
\begin{cases}
1 \hspace{0.5in} &\mbox{ if } 1 \leq x < 2 \\
h(d) \cdot x \hspace{0.5in} &\mbox{ if } 2 \leq x.
\end{cases}\] 
If $n$ and $d$ are positive integers with
\begin{itemize}
\item $h(d)>8$
\item $n \geq 8 \cdot h(d)$
\item $\sqrt{n} \geq \frac{2}{1 - \sqrt{\frac{8}{h(d)}}}$
\end{itemize}
then any graph $G \in \G_d$ on $n$ vertices is well-burnable.
\end{theorem}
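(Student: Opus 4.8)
The plan is to feed the given tethering directly into Theorem~\ref{thm:main} and carry out the resulting one-variable optimization, with the three hypotheses appearing precisely as the conditions that make each step valid. First I would write out $g_{n,d}(x) = n/f_d(x) + 2x$ explicitly: it equals $n + 2x$ on $[1,2)$ and $\frac{n}{h(d)x} + 2x$ on $[2,\infty)$. Since $h(d) > 8$ together with $n \geq 8h(d)$ forces $n > 64$, the values of $g_{n,d}$ on $[1,2)$ are all at least $n+2$, which dwarfs $g_{n,d}(2) = \frac{n}{2h(d)} + 4$; hence the global minimum of $g_{n,d}$ is attained on $[2,\infty)$ and I may restrict attention there.

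On $[2,\infty)$ the function $\frac{n}{h(d)x} + 2x$ is convex, so its unconstrained minimizer is found by setting the derivative to zero, giving $m_{n,d} = \sqrt{\frac{n}{2h(d)}}$. Here the hypothesis $n \geq 8h(d)$ does exactly the work I need: it guarantees $m_{n,d} \geq 2$, so the critical point genuinely lies in the linear regime of $f_d$ and is therefore the true minimizer of $g_{n,d}$. At the critical point the standard identity for functions of the form $a/x + 2x$ gives $\frac{n}{h(d)m_{n,d}} = 2m_{n,d} = \sqrt{\frac{2n}{h(d)}}$.

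Next I would produce the explicit bound $M_{n,d}$ demanded by Theorem~\ref{thm:main}. Using $\lceil m_{n,d}\rceil < m_{n,d} + 1$ together with the monotonicity of $n/(h(d)x)$, I would estimate $g_{n,d}(\lceil m_{n,d}\rceil) \leq \frac{n}{h(d)m_{n,d}} + 2(m_{n,d}+1) = 2\sqrt{\frac{2n}{h(d)}} + 2$; a quick check shows this same quantity also dominates $\lceil m_{n,d}\rceil$, so $M_{n,d} = 2\sqrt{\frac{2n}{h(d)}} + 2$ is a legitimate choice in the sense required by the theorem. Theorem~\ref{thm:main} then yields $b(G) \leq 2\sqrt{\frac{2n}{h(d)}} + 2$ for every $G \in \G_d$ on $n$ vertices.

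Finally I would show this bound is at most $\sqrt{n}$. Rearranging $2\sqrt{2}\sqrt{n/h(d)} + 2 \leq \sqrt{n}$ gives $2 \leq \sqrt{n}\bigl(1 - \sqrt{8/h(d)}\bigr)$; the condition $h(d) > 8$ makes the parenthesized factor positive, and dividing through reproduces exactly the third hypothesis $\sqrt{n} \geq 2/(1 - \sqrt{8/h(d)})$. Since then $b(G) \leq \sqrt{n} \leq \lceil\sqrt{n}\rceil$, the graph $G$ is well-burnable. The only genuinely delicate point is the piecewise nature of $f_d$: I must confirm the minimizer avoids the constant regime $[1,2)$ and lands where $f_d$ is linear, which is precisely why the bound $n \geq 8h(d)$ is imposed. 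Everything else is routine convex optimization and algebraic rearrangement, with the remaining two hypotheses falling out naturally as the positivity and threshold conditions of the final inequality.
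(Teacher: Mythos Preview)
Your proposal is correct and follows essentially the same route as the paper: you apply Theorem~\ref{thm:main} to the given tethering, locate the minimizer of $g_{n,d}$ via convexity on $[2,\infty)$, verify it falls in the linear regime using $n \geq 8h(d)$, bound both $g_{n,d}(\lceil m_{n,d}\rceil)$ and $\lceil m_{n,d}\rceil$ by $\sqrt{8n/h(d)}+2$, and then rearrange the resulting inequality into the third hypothesis using $h(d)>8$. Your justification that the minimizer avoids $[1,2)$ (comparing $g_{n,d}$ on $[1,2)$ directly to $g_{n,d}(2)$) is in fact slightly cleaner than the paper's corresponding step, but the argument is otherwise identical.
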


\begin{proof}
Using the same notation as Theorem~\ref{thm:main}, $g_{n,d}(x)=n+2x$ for $x \in [1,2)$ whereas $g_{n,d}(x) = \frac{n}{f_d(x)}+2x =  \frac{n}{h(d) \cdot x} + 2x$ on $[2,\infty)$. On this latter interval the function is convex in $x$ so a minimum will occur when $\frac{d}{dx} g_{n,d}(x) = 0$. This occurs when $m=\sqrt{\frac{n}{2 \cdot h(d)}}$ which is guaranteed to be in $[2,\infty)$ since $n \geq 8 \cdot h(d)$.  In fact $m$ is the minimizer of $g_{n,d}(x)$ on $[1,\infty)$ because for any $x \in [1,2)$, $\sqrt{\frac{n}{2 \cdot h(d)}} < n + 2 \leq n+2x=g_{n,d}(x)$. So we can set $m_{n,d}=m$. Now observe
    \begin{align*}
    g_{n,d} \left( \lceil m_{n,d} \rceil \right) &= \frac{n}{h(d) \cdot \left( \left \lceil \sqrt{\frac{n}{2 \cdot h(d)}} \ \right  \rceil \right)} + 2 \left( \left \lceil \sqrt{\frac{n}{2 \cdot h(d)}} \ \right  \rceil \right) \\
    & \leq \frac{n}{h(d) \cdot \sqrt{\frac{n}{2 \cdot h(d)}} } + 2 \left( \sqrt{\frac{n}{2 \cdot h(d)}} +1 \right) \\
    & = \sqrt{\frac{8n}{h(d)}} + 2,
    \end{align*}
   So $ \sqrt{\frac{8n}{h(d)}} + 2$ is an upper bound on $\min \left \{g_{n,d} \left( \lfloor m_{n,d} \rfloor \right), g_{n,d} \left( \lceil m_{n,d} \rceil \right)\right \}$. Furthermore, 
    \[
    \lceil m_{n,d} \rceil = \left \lceil \sqrt{\frac{n}{2 \cdot h(d)}} \right \rceil \leq \sqrt{\frac{n}{2 \cdot h(d)}}  + 1\leq \sqrt{\frac{8n}{h(d)}} + 2,
    \]
    so using Theorem~\ref{thm:main} we can set $M_{n,d} = \sqrt{\frac{8n}{h(d)}} + 2$. So, any $G \in \G_d$ on $n$ vertices is well-burnable if 
    \begin{equation}\label{eqn:whenburnable}
    \sqrt{\frac{8n}{h(d)}} + 2 \leq \sqrt{n}.
    \end{equation} 
    Now since $h(d)>8$, we have $\sqrt{\frac{8}{h(d)}} < 1$, so equation~(\ref{eqn:whenburnable}) holds if and only if \[\sqrt{n} \geq \frac{2}{1 - \sqrt{\frac{8}{h(d)}}}\] and so the result follows.
\end{proof}
Theorem~\ref{thm:main2} tells us that if we have a class of connected graphs in which $r$-neighborhoods have size at least linear in $r$ (for $r \geq 2$), with high enough slope of linearity, then graphs in the class are well-burnable if they have sufficiently many vertices. This is despite any condition on degree lower bounds for individual vertices. As a concrete example, even if we can't get a universal lower bound on vertex degrees for a connected graph $G$, if we know that $|N_r(v)| \geq 9r$ for any $r \in \{2,3,\ldots,\rad(G)\}$ and any vertex $v$, then $G$ is well-burnable provided that $G$ has slightly more than $1200$ vertices. Theorem~\ref{thm:main2} is just an example of the versatility of Theorem~\ref{thm:main}.

Since Theorem~\ref{thm:trianglefree} and Theorem~\ref{thm:main2} follow as a result of Theorem~\ref{thm:main}, we are motivated to prove Theorem~\ref{thm:main}. The proof can be seen as a generalization of the proof of Theorem 1 in \cite{kamali2020burning}.

\begin{proof}(of Theorem~\ref{thm:main})
Fix a positive integer $r \in \{1,2,\ldots,\rad(G)\}$ and let $A=\{v_1,v_2,\ldots,v_{|A|}\}$ be a maximal set of vertices in $G$ with respect to having disjoint closed $r$-neighborhoods, so the sets $N_r(v_1),\ldots,N_r(v_{|A|})$ are disjoint. By Lemma~\ref{lem:maximal}, $b(G) \leq |A|+2r$. Since $G \in \G_d$, by our assumptions, we have $|N_r(v_i)| \geq f_d(r)$ for any $i$. From this,
\[
n \geq  \sum_{i=1}^{|A|} |N_r(v_i)| \geq  |A| \cdot f_d(r),
\]
so
\begin{equation}\label{eqn:burninginequality2}
b(G) \leq |A|+2r \leq \frac{n}{f_d(r)}+2r=g_{n,d}(r).    
\end{equation}
Now suppose $\rad(G) \leq m_{n,d}$. Select $v$ so that $\ecc(v)=\rad(G)$ and start the burning process with $v$. This will burn the graph in at most $\rad(G) \leq m_{n,d}$ rounds.

Otherwise, $\rad(G) > m_{n,d}$. Since inequality~(\ref{eqn:burninginequality2}) is true for each $r \in \{1,2,\ldots,\rad(G)\}$, we have 
\begin{equation}\label{eqn:inequality}
b(G) \leq  \min_{r \in \{1,2,\ldots,\rad(G)\}} g_{n,d}(r). \\
\end{equation}
Since $1 \leq m_{n,d} \leq \rad(G)$, both $\lfloor m_{n,d} \rfloor$ and $\lceil m_{n,d} \rceil$ are in $\{1,2,\ldots,\rad(G)\}$ so by inequality (\ref{eqn:inequality}), $b(G) \leq g_{n,d} \left( \lfloor m_{n,d} \rfloor\right)$ and $b(G) \leq g_{n,d} \left( \lceil m_{n,d} \rceil\right)$. Merging the cases when $1 \leq m_{n,d} \leq \rad(G)$ and $\rad(G)<m_{n,d}$, we conclude that $b(G)$ is bounded above by any upper bound on $\lceil m_{n,d} \rceil$ and $\min \left \{g_{n,d} \left( \lfloor m_{n,d} \rfloor \right), g_{n,d} \left( \lceil m_{n,d} \rceil \right)\right \}$.
\end{proof}
It should be noted that Theorem~\ref{thm:main} illustrates an issue with the result in Theorem 1 of \cite{kamali2020burning}. Therein, the authors use an instance of the framework here with $m_{n,d}=\sqrt{\frac{3n}{2(d+1)}}$ and evaluate their analogous function $g_{n,d}(x)$ at this value. However, this value $m_{n,d}$ is likely not an integer and it may be the case that $g_{n,d}(m_{n,d})$ is actually less than $\min_{r \in \{1,2,\ldots,\rad(G)\}} g_{n,d}(r)$. One can check that using the framework of Theorem~\ref{thm:main}  their same theorem can be proven with $23$ altered to $36$. Regardless, their work still establishes the significant breakthrough that connected graphs whose vertices have sufficiently high degree are well-burnable.

\section{Well-Burnable Trees \& Degree Statistics}\label{sec:trees}

By the Tree Reduction Theorem \cite{bonato2016burn} (see Theorem~\ref{thm:treereduction}) it suffices to prove the Burning Number Conjecture for trees. With this as motivation, this section establishes an inequality on degree statistics of trees that, if satisfied, guarantees a tree is well-burnable. This main result is Theorem~\ref{thm:trees}, and we show through Theorem~\ref{thm:trees2}, Theorem~\ref{thm:prob} and Theorem~\ref{thm:prob2} how it serves as a paradigm for establishing many classes of trees are well-burnable. As is the theme of this article, we hope to see many more applications of the general paradigm afforded by Theorem~\ref{thm:trees}.

In \cite{burningsurvey}, Bonato makes reference to Theorem~\ref{thm:23} and continues to say ``Although this result encompasses a large class of graphs, it omits the class of trees''.  He further states ``it would be interesting to consider other classes where [the conjecture] holds, such as prescribed classes of trees''. We address these statements in Theorem~\ref{thm:trees2} by proving any \emph{tree} whose non-leaf vertices have degree at least $4$ is well-burnable, and any tree on at least $81$ vertices whose non-leaf vertices have degree at least $3$ is well-burnable. This result complements the discoveries in Theorem~\ref{thm:improved1} and Theorem~\ref{thm:improved2}.

The big hole that remains is what to do in the presence of vertices of degree $2$; as soon as even one such vertex appears, all the previous results become irrelevant. We address this in Theorem~\ref{thm:prob} and Theorem~\ref{thm:prob2}, both consequences of Theorem~\ref{thm:trees}, by proving trees that do not have a high concentration of degree $2$ vertices among their non-leaf vertices, or have a low concentration of high degree vertices therein, are forced to be well-burnable (provided they have sufficiently many non-leaf vertices).

Throughout this section, if $G$ is a tree on $n$ vertices, we write $n'$ for the number of non-leaf vertices in $G$. Furthermore, we write $n_k$ for the number of vertices of degree $k$.
%

\begin{theorem}\label{thm:trees}
If $G$ is a tree and
\[
\left \lceil 2 \sqrt{\frac{n_2+n_3+n_4+\cdots}{3}} \ \right \rceil +2 \leq \lceil \sqrt{2+n_2+2n_3+3n_4+\cdots} \ \rceil.
\]
then $G$ is well-burnable.
\end{theorem}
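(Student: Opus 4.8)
The plan is to use the Tree Reduction machinery indirectly by bounding $b(G)$ from above via a maximal packing of disjoint closed $r$-neighborhoods, exactly as in Lemma~\ref{lem:maximal}, and then to compare this upper bound against $\lceil \sqrt{n} \ \rceil$. The right-hand side $\lceil \sqrt{2+n_2+2n_3+3n_4+\cdots} \ \rceil$ is suggestive: in a tree, $n = 1 + \sum_{k \geq 1} (k-1) n_k$ by a standard edge-counting identity (the number of edges is $n-1 = \frac{1}{2}\sum_k k\, n_k$), and the expression $2 + n_2 + 2n_3 + 3n_4 + \cdots = 2 + \sum_{k \geq 2}(k-1)n_k$ is closely tied to $n$. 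So first I would establish a clean identity relating $2 + n_2 + 2n_3 + \cdots$ to $n$; I expect this quantity is essentially $n + 1 - n_1$ or similar, so that the right-hand side is a lower bound on or an expression for $\lceil \sqrt{n} \ \rceil$ itself (or something provably at most $\lceil \sqrt{n} \ \rceil$). If the right-hand side equals $\lceil \sqrt{n} \ \rceil$ up to the tree identity, then the theorem reduces to showing $b(G) \leq \lceil 2\sqrt{(n_2+n_3+\cdots)/3} \ \rceil + 2$.

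The core of the argument will therefore be to prove the left-hand quantity bounds $b(G)$ from above. The sum $n_2 + n_3 + n_4 + \cdots = n'$ is precisely the number of non-leaf vertices. So the claim becomes $b(G) \leq \lceil 2\sqrt{n'/3} \ \rceil + 2$. I would derive this by working with the subtree (or subforest) induced on the non-leaf vertices: burning the non-leaf skeleton and then extending by one extra round to reach the leaves. Concretely, I would take $r$ appropriately and apply Lemma~\ref{lem:maximal} to control the number of centers needed. The factor $2\sqrt{n'/3}$ matches the path bound $\lceil \sqrt{n'} \ \rceil$ scaled in the way one gets from $b(G) \le |A| + 2r$ when optimizing over $r$ with a linear lower bound on neighborhood sizes; the constant $2$ accounts for reaching leaves and for the ceiling/rounding slack. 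I would optimize $g(r) = n'/f(r) + 2r$ in the style of Theorem~\ref{thm:main}, with $f$ a linear-in-$r$ lower bound on neighborhood sizes within the non-leaf structure, yielding the $2\sqrt{\cdot/3}$ shape after setting the derivative to zero.

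The hard part will be justifying the linear lower bound on closed $r$-neighborhood sizes restricted to (or counted against) the non-leaf vertices, since trees can have long degree-$2$ paths where $r$-neighborhoods grow only linearly, not quadratically — this is exactly why the bound is $2\sqrt{n'/3}$ rather than $\sqrt{n'}$. I would argue that along any shortest-path structure in the tree, a closed $r$-neighborhood of a non-leaf vertex contains at least on the order of $3r$ non-leaf vertices (or that a maximal $r$-packing has at most $n'/(3r)$ centers), giving the $1/3$ constant; this likely comes from the fact that at distance increments one can guarantee a controlled number of new non-leaf vertices. Once this packing bound is in hand, combining $b(G) \leq |A| + 2r$ with the optimal $r \approx \sqrt{n'/3}$ gives the left-hand side, and matching it to $\lceil \sqrt{n} \ \rceil$ via the tree identity on the right completes the proof.
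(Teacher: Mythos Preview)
Your high-level reading is right: by the handshake identity for trees one has exactly $n = 2 + n_2 + 2n_3 + 3n_4 + \cdots$, so the right-hand side is literally $\lceil \sqrt{n}\,\rceil$; and $n' = n_2 + n_3 + \cdots$ is the number of non-leaf vertices, so the task reduces to proving $b(G) \le \lceil 2\sqrt{n'/3}\,\rceil + 2$. You also correctly see that burning the non-leaf skeleton $G'$ and then spending one extra round on the leaves gives $b(G) \le b(G') + 1$.

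The gap is in how you propose to bound $b(G')$. A packing argument via Lemma~\ref{lem:maximal} with a linear tether $|N_r(v)| \ge c\,r$ yields, after optimizing $r$, the bound $b(G') \le \sqrt{8n'/c} + O(1)$. To land on $2\sqrt{n'/3} = \sqrt{4n'/3}$ you would need $c = 6$, not $c = 3$; and neither is available in a tree, since on a bare path $|N_r(v)| = 2r+1$. So the ``hard part'' you flag is not merely hard but impossible by this route: no linear tether on tree neighborhoods can produce the constant $4/3$.

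The paper gets this step for free by quoting the general upper bound $b(G') \le \big\lceil \sqrt{4n'/3}\,\big\rceil + 1$ from \cite{improvedburning} (equivalently \cite{bonato2021improved}), applied to the connected graph $G'$ on $n'$ vertices. That bound is the $\sqrt{4n/3} + O(1)$ result mentioned in the introduction, and its proof is substantially more delicate than a maximal-packing count. Once you invoke it, adding one round for the leaves gives exactly $\lceil 2\sqrt{n'/3}\,\rceil + 2$, and the theorem follows immediately from the handshake identity. Replace your packing plan with this citation and the argument goes through.
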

\begin{proof}
Let $G'$ be the graph obtained from $G$ by deleting its degree $1$ vertices. Then $G'$ has $n'$ many vertices. The graph $G'$ is itself connected so by \cite{improvedburning} we can burn $G'$ in at most $\left \lceil \sqrt{\frac{4n'}{3}} \right \rceil +1$ rounds.  We can subsequently burn $G$ in at most $\left \lceil \sqrt{\frac{4n'}{3}} \right \rceil +2$ rounds by burning the vertices of $G$ in $G'$ and needing at most one more round to burn the degree $1$ vertices of $G$. As a consequence, $G$ is well-burnable if $\left \lceil \sqrt{\frac{4n'}{3}} \right \rceil +2 \leq \lceil \sqrt{n} \ \rceil$. Now by the Handshake Lemma,
\begin{align*}
2(n-1) &= n_1+2n_2+3n_3+\cdots \\
&= (n_1+n_2+n_3+\cdots)+(n_2+2n_3+3n_4+\cdots) \\
&= n + n_2+2n_3+\cdots,\\
\end{align*}
so we have $n=2+n_2+2n_3+3n_4+\cdots$ whereas $n'=n_2+n_3+n_4+\cdots$ and this translates to $G$ being well-burnable if
\[
\left \lceil 2 \sqrt{\frac{n_2+n_3+n_4+\cdots}{3}} \ \right \rceil +2 \leq \lceil \sqrt{2+n_2+2n_3+3n_4+\cdots} \ \rceil.
\]
\end{proof}
We can now address trees whose non-leaf vertices have degree lower bounds.
\begin{theorem}\label{thm:trees2}
Let $\mathcal{T}_d$ be the class of trees whose non-leaf vertices have degree at least $d$. Then every graph in $\mathcal{T}_d$ is well-burnable for $d \geq 4$, and every graph in $\mathcal{T}_3$ on at least $81$ vertices is well-burnable.
\end{theorem}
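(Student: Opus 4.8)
The plan is to apply Theorem~\ref{thm:trees} directly, reducing each claim to verifying the inequality
\[
\left\lceil 2\sqrt{\frac{n'}{3}}\,\right\rceil + 2 \leq \lceil \sqrt{n}\,\rceil,
\]
where I use the identities $n' = n_2+n_3+n_4+\cdots$ and $n = 2+n_2+2n_3+3n_4+\cdots$ established in the proof of Theorem~\ref{thm:trees}. The central observation is that a degree lower bound $d$ on the non-leaf vertices forces $n$ to be large relative to $n'$, because every non-leaf vertex contributes at least $d-1$ to the sum $n_2+2n_3+3n_4+\cdots = n-2$. Concretely, since each term $n_k$ with $k \geq d$ is multiplied by $k-1 \geq d-1$ in that sum, I would derive $n - 2 \geq (d-1)\,n'$, hence $n \geq (d-1)n' + 2$. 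This linear amplification of $n$ by the factor $d-1$ is exactly what makes the square-root inequality go through.

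For the case $d \geq 4$, I would substitute $n \geq 3n'+2$ into the target inequality. First I would discard the ceilings by using $\lceil x \rceil < x+1$ on the left and $\lceil x \rceil \geq x$ on the right, so it suffices to show $2\sqrt{n'/3} + 3 \leq \sqrt{3n'+2}$. Squaring (after checking the right side exceeds the left, which holds once $n'$ is not tiny) and simplifying should reduce this to a statement that holds for all $n' \geq 1$; I expect the slack to be comfortable because the leading term on the right is $\sqrt{3}\sqrt{n'}$ while the left grows like $(2/\sqrt{3})\sqrt{n'}$, and $\sqrt{3} > 2/\sqrt{3}$. The small values of $n'$ where the additive constants matter can be checked by hand. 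A clean way to organize this is as follows.

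\begin{proof}[Proof sketch for $d \geq 4$]
By the Handshake identities in Theorem~\ref{thm:trees}, $n-2 = \sum_{k\geq 2}(k-1)n_k \geq (d-1)\sum_{k\geq 2}n_k = (d-1)n'$, so $n \geq 3n'+2$ when $d \geq 4$. It therefore suffices to verify
\[
\left\lceil 2\sqrt{\tfrac{n'}{3}}\,\right\rceil + 2 \leq \lceil \sqrt{3n'+2}\,\rceil,
\]
which one checks holds for all $n' \geq 1$ by bounding the left side above by $2\sqrt{n'/3}+3$ and the right side below by $\sqrt{3n'+2}$, then squaring; the finitely many small cases are verified directly.
\end{proof}

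For $d = 3$, the same amplification gives only $n \geq 2n'+2$, which is weaker, and this is where I expect the main obstacle to lie: the inequality $2\sqrt{n'/3}+3 \leq \sqrt{2n'+2}$ compares leading coefficients $2/\sqrt{3} \approx 1.155$ against $\sqrt{2} \approx 1.414$, so the square-root terms still favor the right side, but the additive $+3$ (arising from the two ceilings plus the $+2$) now dominates until $n'$ is sufficiently large. The hypothesis $n \geq 81$ is meant to clear precisely this threshold. I would translate $n \geq 81$ together with $n \geq 2n'+2$ into a usable lower bound on $n$ in terms of $n'$, being careful that the bound $n \geq 2n'+2$ may itself be what forces $n' $ large enough; the cleanest route is probably to carry $n$ directly rather than eliminating it, verifying $\lceil 2\sqrt{n'/3}\rceil + 2 \leq \lceil\sqrt{n}\rceil$ using both $n \geq 2n'+2$ and $n \geq 81$, and handling the boundary region $n' $ small separately where the constraint $n \geq 81$ does the work. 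The delicate interplay of the two ceilings near the threshold $n=81$ is the one place where the estimate must be tight rather than generous, so I would isolate that regime and check it with exact arithmetic.
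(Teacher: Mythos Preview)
Your approach is essentially the paper's: derive $n-2\geq (d-1)n'$ from the Handshake identity, feed this into the inequality of Theorem~\ref{thm:trees}, and separate out small cases. The $d=3$ discussion also matches the paper, which simply records that the inequality holds once $n\geq 81$.

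There is, however, a genuine gap in your $d\geq 4$ sketch. You assert that
\[
\left\lceil 2\sqrt{\tfrac{n'}{3}}\right\rceil + 2 \;\leq\; \left\lceil \sqrt{3n'+2}\,\right\rceil
\]
``holds for all $n'\geq 1$'' and that the small cases are ``verified directly.'' But the inequality itself is \emph{false} for $n'\in\{1,2,4,7\}$ (for instance at $n'=7$ the left side is $6$ and the right side is $5$). So checking these values by hand does not rescue the argument; you are forced to abandon the inequality entirely in this range and argue about the trees themselves. The paper does exactly this: for $n$ at most $24$ one has $n'\leq 7$, so the leafless tree $G'$ burns in at most $\lceil\sqrt{7}\,\rceil=3$ rounds and hence $G$ burns in at most $4$; this suffices once $n\geq 10$, and the finitely many trees in $\mathcal{T}_d$ with $n\leq 9$ are inspected individually. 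Your sketch needs this additional ingredient --- a direct burning-number argument, not a numeric check of the failed inequality --- to close the small-$n'$ cases.
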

\begin{proof}
Suppose every non-leaf vertex of $G$ has degree at least $d$. Then as in the proof of Theorem~\ref{thm:trees},
\begin{align*}
n &=2+(d-1)n_d+dn_{d+1}+\cdots \\
&\geq (d-1)(n_d+n_{d+1}+\cdots) = (d-1)n'
\end{align*}  
and so $n' \leq \frac{n-2}{d-1}$. Since $n'$ is integer, this implies $n' \leq \left \lfloor \frac{n-2}{d-1} \right \rfloor$. Subsequently by Theorem~\ref{thm:trees}, $G$ is well-burnable provided that
\begin{equation}\label{floorceiling}
\left \lceil 2\sqrt{\frac{1}{3} \left \lfloor \frac{n-2}{d-1} \right \rfloor} \ \right \rceil +2 \leq \lceil \sqrt{n} \rceil.
\end{equation}
First consider when $d \geq 4$. For such $d$, inequality~(\ref{floorceiling}) is satisfied for $n \geq 25$, so it remains to resolve the trees in $\mathcal{T}_d$ with $n \leq 24$ vertices. In any such tree $G$, $n' \leq \lfloor \frac{n-2}{3} \rfloor  \leq \lfloor \frac{22}{3} \rfloor = 7$, so if we delete the degree $1$ vertices we are left with a tree $G'$ on at most $7$ vertices. It can be verified that trees on at most $7$ vertices are well-burnable and hence $G'$ can be burned in at most $\lceil \sqrt{7} \rceil = 3$ rounds, so $G$ can be burned in at most $4$ rounds. From this, if $n \geq 10$, $b(G) \leq 4 = \lceil \sqrt{10} \rceil \leq \lceil \sqrt{n} \rceil$. It remains to resolve the trees in $\mathcal{T}_d$ with $n \leq 9$ vertices. There are very few graphs in $\mathcal{T}_d$ on $n$ vertices where $d \geq 4$ and $n \leq 9$, and all these can be verified to be well-burnable by direct computation. Altogether, every graph in $\mathcal{T}_d$ is well-burnable if $d \geq 4$. Finally for $d=3$ one can check that inequality~(\ref{floorceiling}) is satisfied for $n \geq 81$.
\end{proof}

We now suggest a method for accounting for the presence of degree $2$ vertices. If we look at the inequality in Theorem~\ref{thm:trees} we might suspect that if the concentration of degree $2$ vertices among the non-leaf vertices is not too high, then our given graph is well-burnable. Let's quantify this. Let $p \in [0,1]$ and suppose that $pn'$ of the non-leaf vertices have degree $2$, and so $(1-p)n'$ have degree at least $3$. Since $n = 2+n_2 + 2n_3 + 3n_4 + \cdots \geq 2+pn'+2(1-p)n'$, Theorem~\ref{thm:trees} tells us a tree $G$ is well-burnable provided that
\[
\left \lceil 2 \sqrt{\frac{n'}{3}} \ \right \rceil +2 \leq \lceil \sqrt{2+(p+2(1-p))n'} \ \rceil
\]
and this occurs for sufficiently large $n'$ provided that $\frac{2}{\sqrt{3}} < \sqrt{p+2(1-p)}$ or equivalently $p<\frac{2}{3}$. In conclusion, if fewer than $\frac{2}{3}$ of the non-leaf vertices in a tree have degree $2$, and the tree has sufficiently many non-leaf vertices, it is well-burnable.
\begin{theorem}\label{thm:prob}
Let $p \in \left[0,\frac{2}{3}\right)$, and let $\mathcal{G}_p$ be the class of trees whose concentration of degree $2$ vertices among all non-leaf vertices is $p$, i.e. $p=\frac{n_2}{n'}$. Then there is a constant $N_p>0$ so that any graph in $\mathcal{G}_p$ on at least $N_p$ non-leaf vertices is well-burnable.
\end{theorem}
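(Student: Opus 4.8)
The plan is to invoke Theorem~\ref{thm:trees} directly, translate its inequality into a comparison involving only $n'$ and $p$, and then show the resulting inequality holds once $n'$ is large. First I would identify the two sides of the criterion in Theorem~\ref{thm:trees}. Since $n' = n_2 + n_3 + n_4 + \cdots$, the left-hand side is exactly $\lceil 2\sqrt{n'/3}\,\rceil + 2$. For the right-hand side I would use the degree hypothesis: writing $n_2 = pn'$ and noting that the remaining $(1-p)n'$ non-leaf vertices each have degree at least $3$, the identity $n = 2 + n_2 + 2n_3 + 3n_4 + \cdots$ yields
\[
n \;\geq\; 2 + pn' + 2(1-p)n' \;=\; 2 + (2-p)n'.
\]
Hence it suffices to establish
\[
\left\lceil 2\sqrt{\frac{n'}{3}}\,\right\rceil + 2 \;\leq\; \left\lceil \sqrt{2 + (2-p)n'}\,\right\rceil .
\]

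Next I would clear the ceiling functions by bounding each side crudely: the left side is at most $2\sqrt{n'/3} + 3$, and the right side is at least $\sqrt{(2-p)n'}$. The inequality then reduces to the one-variable comparison $\frac{2}{\sqrt{3}}\sqrt{n'} + 3 \leq \sqrt{2-p}\,\sqrt{n'}$, equivalently
\[
3 \;\leq\; \left(\sqrt{2-p} - \tfrac{2}{\sqrt{3}}\right)\sqrt{n'} .
\]
The crux is that the coefficient $c_p := \sqrt{2-p} - \frac{2}{\sqrt{3}}$ is strictly positive exactly because $p < \frac{2}{3}$: since $\frac{2}{\sqrt{3}} = \sqrt{4/3}$, the condition $p < \frac{2}{3}$ is equivalent to $2 - p > 4/3$. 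Taking $N_p = \lceil 9/c_p^2 \rceil$ then guarantees the inequality for all $n' \geq N_p$, completing the argument via Theorem~\ref{thm:trees}.

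The proof is essentially a direct reduction, so the genuine subtleties are modest. The one step requiring care is the degree-accounting bound $n \geq 2 + (2-p)n'$, which is where both the tree structure (through the Handshake-derived identity in Theorem~\ref{thm:trees}) and the assumption that every non-degree-$2$ non-leaf vertex has degree at least $3$ are used; one must confirm that lumping all higher degrees into the single lower bound $2(1-p)n'$ is valid. The feature worth flagging is that $c_p \to 0$ as $p \to \tfrac{2}{3}^{-}$, so $N_p = \lceil 9/c_p^2\rceil$ blows up near the threshold — this is precisely why the hypothesis demands ``sufficiently many non-leaf vertices'' and why the boundary case $p = \frac{2}{3}$ must be excluded, for there the two leading coefficients coincide and the additive constant $3$ can no longer be absorbed.
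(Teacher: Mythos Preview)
Your argument is correct and follows essentially the same route as the paper: both invoke Theorem~\ref{thm:trees}, use the degree-accounting bound $n \geq 2 + pn' + 2(1-p)n'$, and reduce to the asymptotic comparison $\tfrac{2}{\sqrt{3}} < \sqrt{2-p}$, which holds precisely when $p < \tfrac{2}{3}$. Your version is slightly more explicit in that you clear the ceilings by hand and extract the concrete threshold $N_p = \lceil 9/c_p^2\rceil$, whereas the paper leaves the ``sufficiently large $n'$'' step informal.
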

Theorem~\ref{thm:prob} can be made stronger as the graph burning literature introduces better upper bounds for $b(G)$. Suppose for instance that one proves any graph $G$ on $n$ vertices has $b(G) \leq \lceil C \sqrt{n} \rceil + O(1)$ for some constant $C$ that is very close to but greater than $1$. Applying the same developments as in the ones leading to Theorem~\ref{thm:prob}, we will have the same result but with $p$ ranging in $\left[0,2-C^2\right)$. As $C>1$ but close to $1$, this allows $p$ to be less than but very close to $1$ itself, so we can have increasingly higher concentrations of degree $2$ vertices among the non-leaves and still be well-burnable. 

Finally, we can use Theorem~\ref{thm:trees} to get an even better sense of how degree distribution forces trees to be well-burnable. For any $k \geq 2$, let $p_k = \frac{n_k}{n'}$, the concentration of degree $k$ vertices among the non-leaf vertices. We see by eliminating ceilings and dividing by $\sqrt{n'}$ in Theorem~\ref{thm:trees} that $G$ is well-burnable provided that
\[
\frac{2}{\sqrt{3}} \cdot \sqrt{p_2+p_3+p_4+\cdots}  + \frac{3}{\sqrt{n'}} \leq \sqrt{\frac{2}{n'} + p_2 + 2p_3 + 3p_4 + \cdots}.
\]
If we stabilize the concentrations $\{p_k\}_{k \geq 2}$ and let the number of non-leaf vertices grow then this inequality will be satisfied when 
\begin{equation}\label{eqn:prob}
\frac{2}{\sqrt{3}} \cdot \sqrt{p_2+p_3+p_4+\cdots}  < \sqrt{p_2 + 2p_3 + 3p_4 + \cdots} 
\end{equation}
for sufficiently large $n'$. This gives rise to the following theorem:
\begin{theorem}\label{thm:prob2}
Let $G$ be a tree on $n'$ non-leaf vertices, and let $p_k$ be the concentration of degree $k$ vertices among the non-leaf vertices. For sufficiently large $n'$, if
\[
p_4 + 2p_5 + 3p_6 + \cdots \ > \ \frac{1}{3}
\]
then $G$ is well-burnable. In particular, this holds if for some $k \geq 4$ the concentration of vertices of degree at \emph{least} $k$ among the non-leaf vertices exceeds $\frac{1}{3(k-3)}$.
\end{theorem}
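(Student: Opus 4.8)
The plan is to reduce the stated degree condition to the limiting inequality~(\ref{eqn:prob}) already obtained in the discussion preceding the theorem, and then read off the ``in particular'' clause as an immediate consequence. Recall that since every non-leaf vertex has degree at least $2$, the concentrations satisfy $\sum_{k \geq 2} p_k = 1$ (a finite sum, as $G$ is a finite tree). Using this, the left-hand side of~(\ref{eqn:prob}) collapses to $\frac{2}{\sqrt{3}}$, so~(\ref{eqn:prob}) is equivalent to the single scalar inequality
\[
p_2 + 2p_3 + 3p_4 + 4p_5 + \cdots \ > \ \frac{4}{3},
\]
and by the developments leading to~(\ref{eqn:prob}) this guarantees that $G$ is well-burnable once $n'$ is large enough. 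Thus it suffices to show that the hypothesis $T := p_4 + 2p_5 + 3p_6 + \cdots > \frac{1}{3}$ forces $S := p_2 + 2p_3 + 3p_4 + \cdots > \frac{4}{3}$.

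First I would record the algebraic identity relating $S$ and $T$. Writing $S = \sum_{k \geq 2}(k-1)p_k$ and $T = \sum_{k \geq 4}(k-3)p_k$ and comparing coefficients term by term gives $S - T = p_2 + 2p_3 + 2\sum_{k \geq 4} p_k$; substituting $\sum_{k \geq 4} p_k = 1 - p_2 - p_3$ simplifies this to $S - T = 2 - p_2$. Hence $S = T + 2 - p_2 \geq T + 1$, using only $p_2 \leq 1$. Consequently $T > \frac{1}{3}$ immediately yields $S > \frac{4}{3}$, which completes the main implication.

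For the ``in particular'' statement, I would fix $k \geq 4$ and suppose the concentration $q := \sum_{j \geq k} p_j$ of non-leaf vertices of degree at least $k$ exceeds $\frac{1}{3(k-3)}$. Discarding the nonnegative terms with $4 \leq j < k$ and then bounding $j - 3 \geq k - 3 > 0$ for $j \geq k$, I obtain $T = \sum_{j \geq 4}(j-3)p_j \geq (k-3)\sum_{j \geq k} p_j = (k-3)q > \frac{1}{3}$, so the hypothesis of the main implication holds.

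The argument is essentially bookkeeping once the reduction to~(\ref{eqn:prob}) is in hand, so I do not expect a genuine obstacle in the algebra. The one point that deserves care is the meaning of ``sufficiently large $n'$'': the passage from the finite ceiling inequality of Theorem~\ref{thm:trees} to its limiting form~(\ref{eqn:prob}) requires upgrading a \emph{strict} limiting inequality to the exact finite inequality, absorbing the $\frac{3}{\sqrt{n'}}$ and $\frac{2}{n'}$ correction terms together with the two ceilings. This is where the threshold on $n'$ enters, and that threshold depends on the size of the gap $S - \frac{4}{3}$ (equivalently $T - \frac{1}{3}$); making this dependence explicit, if desired, is the only analytic step beyond the pure algebra above.
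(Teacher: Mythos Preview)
Your proposal is correct and follows essentially the same route as the paper: both arguments reduce to the limiting form~(\ref{eqn:prob}) (equivalently $S>\tfrac{4}{3}$) and then handle the ``in particular'' clause by the same monotonicity bound $T\geq (k-3)\sum_{j\geq k}p_j$. Your derivation is in fact a bit more transparent: you make the identity $S=T+2-p_2$ explicit and deduce $S\geq T+1>\tfrac{4}{3}$ directly, whereas the paper routes the first sentence through Theorem~\ref{thm:prob} and the condition $p_2<\tfrac{2}{3}$ (which is not actually a consequence of $T>\tfrac{1}{3}$ on its own, so your direct verification of~(\ref{eqn:prob}) is the cleaner path).
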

\begin{proof}
By Theorem~\ref{thm:prob} it is sufficient to show that $p_2<\frac{2}{3}$. For the first part, this follows from the given inequality and squaring inequality~(\ref{eqn:prob}). For the second part, observe our given inequality is satisfied when $(k-3)p_k+(k-2)p_{k+1}+ \cdots > \frac{1}{3}$ and this in turn is satisfied when the stronger inequality $(k-3)(p_k+p_{k+1}+\cdots)>\frac{1}{3}$ is satisfied. The result follows.
\end{proof}
An illustration of Theorem~\ref{thm:prob2} shows how powerful it can be: if a tree has enough non-leaf vertices and 10\% of them have degree at least $7$, then $G$ is well-burnable, even if the overwhelming majority of the remaining 90\% of the non-leaf vertices have degree $2$.

\section{Future Directions}

The developments in Section~\ref{sec:families} hinge on Lemma~\ref{lem:maximal} which provides an upper bound on the burning number based on disjoint and uniform depth neighborhoods. Perhaps the most direct opportunity for improving the bounds therein is in adapting to neighborhoods of varying sizes. The difficulty with adapting the lemma in this manner is that maximal disjoint neighborhoods of uniform depth allowed us to control the number of additional vertices needed to end the burning process. If we instead, say, picked maximal disjoint neighborhoods whose depths could be one of two possible numbers, controlling the additional number of steps needed to end the burning process could depend heavily on the graph. However, it might be possible to at least achieve some sufficient bounds. A possible insight is to study the $A$-burnable constructions developed by Land and Lu \cite{land2016upper} to see what might be amenable.

Our constructions leave a lot more to be said about trees as well. It would be fruitful to discover a complete characterization of the degree sequences of trees that satisfy the inequality in Theorem~\ref{thm:trees}. The inequality is challenging to work with because of the integer rounding effects, however it is possible to relax these inequalities, make some assumptions about vertex degrees, and still develop substantial insight beyond what was garnered in Section~\ref{sec:trees}. Finally, it would be nice to completely resolve Theorem~\ref{thm:trees2} for $\mathcal{T}_3$. By the arguments in Theorem~\ref{thm:trees2}, this is a computational feat that amounts to checking all trees in $\mathcal{T}_d$ on at most $80$ vertices. It might be particularly helpful to note, again as in the proof of Theorem~\ref{thm:trees2}, that any such tree has at most $39$ non-leaf vertices.

\bibliographystyle{plain}
\bibliography{references}

\end{document}